\newtheorem{theorem}{Theorem}[section]
\newtheorem{lemma}[theorem]{Lemma}
\newtheorem{proposition}[theorem]{Proposition}
\newtheorem{corollary}[theorem]{Corollary}
\theoremstyle{remark}
\newtheorem{example}[theorem]{\bf Example}
\def\int{\mathrm{Int}}
\begin{document}
\selectlanguage{english}
\title[Krull dimension of rings of integer-valued polynomials]{Note on the Krull dimension of rings of integer-valued polynomials}

\author[M.M. Chems-Eddin]{M.M. Chems-Eddin}
\address{Mohamed Mahmoud CHEMS-EDDIN: Department of Mathematics, Faculty of Sciences Dhar El Mahraz, Sidi Mohamed Ben Abdellah University, Fez,  Morocco}
\email{2m.chemseddin@gmail.com}
 
\author[B. Feryouch]{B. Feryouch}
\address{Badr FERYOUCH: Department of Mathematics, Faculty of Sciences Dhar El Mahraz, Sidi Mohamed Ben Abdellah University, Fez,  Morocco}
\email{badr.feryouch@usmba.ac.ma}

\author[A. Tamoussit]{A. Tamoussit}
\address{Ali TAMOUSSIT: Department of Mathematics, The Regional Center for Education and Training Professions Souss-Massa,  Inezgane, Morocco}
\email{a.tamoussit@crmefsm.ac.ma ; tamoussit2009@gmail.com}

\subjclass[2020]{13C15, 13F05, 13F20.}

\keywords{Integer-valued  polynomials, Krull dimension,  Pseudo-valuation domain.}
\date{\today}
	
\maketitle
\begin{abstract} 
Let $D$ be an integral domain with quotient field $K,$ $E$ a subset of $K$ and $X$ an indeterminate over $K$. The set $\mathrm{Int}(E,D):=\{f\in K[X];\; f(E)\subseteq D\}$, of \textit{integer-valued polynomials on $E$ over $D$}, is known to be an integral domain. The purpose of this note is to calculate the Krull dimension of  $\mathrm{Int}(E,D)$ across various classes of integral domains $D$ and specific subsets $E$ of $D$. We further extend our study to the ring $\mathrm{Int}_B(E,D):=\{f\in B[X];\; f(E)\subseteq D\},$ where $B$ is an integral domain containing $D$.
\end{abstract}
	
\section*{Introduction}

The study of the Krull dimension of polynomial rings is a classical topic in commutative ring theory. For the most common classes of integral domains, such as Noetherian domains and Pr\"ufer domains, one has the equality $\dim(D[X])=\dim(D)+1$. However, the Krull dimension of $D[X]$ can be strictly larger than $1 + \dim (D)$ for more exotic rings that are not part of the family of Jaffard domains. Another classical object of study in multiplicative ideal theory is the ring of integer-valued polynomials, which has many interesting properties. A similar study of the Krull dimension, like that conducted for polynomial rings, has also appeared in the literature in the context of rings of integer-valued polynomials. In the following paragraph, we will recall these rings.

\medskip

Let $D$ be an integral domain and $K$ its quotient field. The \textit{ring of integer-valued polynomials} (\textit{of} $D$) is defined as follows: $$\mathrm{Int}(D)= \{f\in K[X];\; f(D)\subseteq D\}.$$ The ring $\mathrm{Int}(\mathbb{Z})$ was first studied by Ostrowski \cite{O19} and P\'{o}lya \cite{P19} in 1919 and, after a century of research, it has become a classical object in commutative ring theory, number theory, and other areas of active research in mathematics (see \cite{C97} and references therein). In \cite{C93}, Cahen considered the ring of \textit{integer-valued polynomials over a subset} $E$ of $K$, defined by: $$\mathrm{Int}(E,D)= \{f\in K[X];\; f(E)\subseteq D\}.$$  This ring generalizes the classical ring of integer-valued polynomials (because $\mathrm{Int}(D)=\mathrm{Int}(D,D)$). It is evident that the inclusions $D[X]\subseteq\mathrm{Int}(D)\subseteq K[X]$  and $D\subseteq\mathrm{Int}(E,D)\subseteq K[X]$ always hold.

\medskip

The rings of integer-valued polynomials are an important source for providing examples of rings that are non-Noetherian with finite Krull dimension, which are generally difficult to find. In this context, the first example that comes to mind is the classical ring $\mathrm{Int}(\mathbb{Z})$, which is known to be a two-dimensional non-Noetherian domain \cite{C97}. Notably, for a finite subset $E$ of the quotient field of an integral domain $D$, the ring $\mathrm{Int}(E,D)$ is non-Noetherian with Krull dimension equal to $1+\dim(D)$ (see \cite[Lemma 4.1]{C93} and \cite[Lemma 4.1\rm(a)]{FIKT97}). Therefore, by starting with an integral domain $D$ of finite Krull dimension and choosing a finite subset $E$ of $K:= \mathrm{qf}(D)$, one can construct a non-Noetherian ring with finite Krull dimension.

\medskip
 	
One of the most challenging open problems related to the ring $\mathrm{Int}(D)$, or even $\mathrm{Int}(E,D)$, is determining its Krull dimension. Although several satisfactory results have been obtained, as noted in the introduction of \cite{FIKT97}, finding a general upper bound for the Krull dimension of $\mathrm{Int}(D)$, in terms of $\dim(D)$, remains unsolved. This unresolved problem has prompted many researchers to delve deeper into the subject for further insights. It is also worth noting that the Krull dimension of rings of the form $\mathrm{Int}(E,D)$ has been studied in the literature much less than the classical ring $\mathrm{Int}(D)$. This is due to the intrinsic difficulties arising from the impact that the subset $E$ can have on the properties of $\mathrm{Int}(E,D)$, especially when viewed in relation to $\mathrm{Int}(D)$ (for instance, the good behavior under localization and the triviality).

\medskip

Before presenting some well-known results, we recall that the \textit{valuative dimension} of an integral domain $D$, denoted by $\dim_v(D)$, is defined as the supremum of the Krull dimensions of all valuation overrings of $D$. It is worth noting that the inequalities $\dim(D) \leqslant \dim_v(D)$ and $\dim_v(T) \leqslant \dim_v(D)$ hold for any integral domain $D$ and any overring $T$ of $D$.

\medskip

In 1997, Cahen and Chabert \cite[Proposition V.1.8]{C97} proved that for any fractional subset $E$ of a Jaffard domain $D$, $\dim(\mathrm{Int}(E,D))=1+\dim(D)$. Additionally, they showed that $\dim_v(\mathrm{Int}(E,D))=1+\dim_v(D)$ for any fractional subset $E$ of $D$; see \cite[Exercise V.5.(ii)]{C97}. Around the same time, Tartarone \cite{T97} pointed out that there are other important results in the case of rings obtained by pullback diagrams and studied the important class of pullback rings given by the pseudo-valuation domains.  Thereafter, in 2004, Fontana and Kabbaj \cite{FK04} established that $\dim(\mathrm{Int}(D))=\dim(D[X])$ for any locally essential domain $D$. More recently, in \cite{COT23}, the authors proved that for certain Jaffard-like domains $D$, the Krull dimension of any ring between $D[X]$ and $\mathrm{Int}(E,D)$ is equal to the Krull dimension of $D[X]$, under the assumption that the subset $E$ is residually cofinite with $D$. Remarkably, this result recovers the previous finding by Fontana and Kabbaj \cite[Theorem 2.1]{FK04}.

\medskip

In the definition of $\mathrm{Int}(E,D)$, if we replace $K[X]$ by $B[X]$, where $B$ is an integral domain containing $D,$ we obtain a new class of rings denoted by $\mathrm{Int}_B(E,D),$ which is called the \textit{ring of $D$-valued $B$-polynomials on} $E$ \textit{over} $D.$ Specifically, $$\mathrm{Int}_B(E,D):=\{f\in B[X];\;f(E)\subseteq D\}.$$
These rings were introduced and studied by Tamoussit in \cite{AT2021}. For simplicity, we write $\mathrm{Int}(E,D)$ and $\mathrm{Int}(D)$ instead of $\mathrm{Int}_K(E,D)$ and $\mathrm{Int}_K(D)$, respectively.
\bigskip	 	

In this paper we aim to dig a little deeper into the study of the Krull dimension of $\mathrm{Int}(E,D)$, and, more generally $\int_B(E,D),$ providing new results and illustrative examples. In particular, we summarize our main results as follows.

We first compare $\dim(\int(E, D))$ with $\dim(D_{\mathfrak{p}}[X])$ for a divided prime ideal $\mathfrak{p}$ of $D$ with infinite residue field (Theorem \ref{Divides2}). Next, we provide sufficient conditions for $\dim(\int(E, D)) = \dim(B[X])$, where $B$ is a specific type of overring of $D$ that shares an ideal $I$ with $D$ such that $D/I$ is finite (Theorem \ref{pairs domain}). We then compare $\dim(\int(E, D))$ with the valuative dimensions $\dim_v(W_{\mathfrak{m}})$, where $D$ has finite character, $\mathfrak{m}$ ranges over the maximal ideals of $D$, and $W_{\mathfrak{m}}$ is an overring of $D_{\mathfrak{m}}$ that shares the ideal $\mathfrak{m}D_{\mathfrak{m}}$ (Theorem \ref{lpsvlf}). Lastly, we examine the Krull dimension of $\int_B(E, D)$ (Theorem \ref{main1}).

\medskip

Throughout this paper $D$ is an integral domain with quotient field $K$ and $E$ is a nonempty subset of $K$.

\section{Main results and examples}\label{sec1}
Recall that a prime ideal $\mathfrak{p}$ of an integral domain $D$ is \textit{divided} if $\mathfrak{p}=\mathfrak{p}D_{\mathfrak{p}}$. It is well-known that all prime ideals of a valuation domain are divided.

\medskip

We begin this section by generalizing \cite[Corollary 2.2]{T97} to the setting of $\mathrm{Int}(E,D)$, under certain conditions on the subsets $E$ of $D$. In \cite[Corollary 2.2]{T97}, Tartarone proved that for any divided prime ideal $\mathfrak{p}$ of an integral domain $D$ with an infinite residue field, the equality $\dim(\mathrm{Int}(D))=\dim(D_{\mathfrak{p}}[X])+\dim(\mathrm{Int}(D/\mathfrak{p}))-1$ holds. Our first main result extends this equality to the case of $\mathrm{Int}(E,D)$ by considering an additional condition that relates the subset $E$ to the prime ideals of $D$. 

\medskip

In \cite{Mu95} Mulay introduced and studied the concept of \textit{residual cofiniteness} in the study of integer-valued polynomials over subsets. A simple characterization of this notion is given in \cite[Lemma 2]{Mu95}, which states that a nonempty subset $E$ of $D$ is residually cofinite with $D$ if and only if the following condition holds: \begin{center}
for each prime ideal $\mathfrak{p}$ of $D$, the condition $|E/\mathfrak{p}| < \infty$ implies $|D/\mathfrak{p}| < \infty$.
\end{center} Clearly, $D$ is residually cofinite with itself.

\begin{lemma}\label{Triv}
Let $D$ be an integral domain, $\mathfrak{p}$  a prime ideal of $D$ with infinite residue field and $E$ a  nonempty subset of $D$. Assume that $E$ satisfies one of the following three conditions:
\begin{enumerate}[\hspace*{0.5cm} $(a)$]
	\item $E$ meets infinitely many cosets of $\mathfrak{p}$.
	\item $E$ is an ideal of $D$ containing $\mathfrak{p}$.
	\item $E$ is residually cofinite with $D$.
\end{enumerate}
Then the following statements hold:
\begin{enumerate}[$(1)$]
\item  $\mathrm{Int}(E,D)$ is contained in $D_{\mathfrak{p}}[X].$ In fact, $\mathrm{Int}(E,D)_\mathfrak{p}=D_{\mathfrak{p}}[X].$
\item If $\mathfrak{p}$ is a divided prime ideal of $D$, then the canonical map $$\begin{array}{ccccc}
\varphi& : & \mathrm{Int}(E,D) & \longrightarrow & \mathrm{Int}(E/\mathfrak{p},D/\mathfrak{p}) \\
 & & f & \longmapsto & \bar{f}:=f+\mathfrak{p}D_{\mathfrak{p}}[X]
\end{array}$$ is a surjective homomorphism with kernel equal to $\mathfrak{p}[X],$ and hence, $$\mathrm{Int}(E,D)/\mathfrak{p}[X]\cong\mathrm{Int}(E/\mathfrak{p},D/\mathfrak{p}).$$
\end{enumerate}
\end{lemma}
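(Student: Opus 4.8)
The plan is to prove the two statements in order, first showing the containment $\mathrm{Int}(E,D)\subseteq D_{\mathfrak p}[X]$ under each of the hypotheses $(a)$, $(b)$, $(c)$, then deducing the localization identity, and finally, under the divided hypothesis, analyzing the map $\varphi$.

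\medskip

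\emph{Step 1: Reduce $(b)$ and $(c)$ to $(a)$.} I would first observe that condition $(a)$ is the substantive one and that the other two reduce to it. Indeed, if $E$ is an ideal of $D$ containing $\mathfrak p$, then since $D/\mathfrak p$ is infinite and $E/\mathfrak p$ is a nonzero ideal of the domain $D/\mathfrak p$ (hence infinite, as a nonzero ideal of an infinite domain cannot be contained in finitely many cosets of $0$), $E$ meets infinitely many cosets of $\mathfrak p$; so $(b)\Rightarrow(a)$. For $(c)$: if $E$ were to meet only finitely many cosets of $\mathfrak p$, i.e. $|E/\mathfrak p|<\infty$, then residual cofiniteness would force $|D/\mathfrak p|<\infty$, contradicting the standing assumption that the residue field is infinite; so $(c)\Rightarrow(a)$ as well. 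Thus it suffices to work under $(a)$.

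\medskip

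\emph{Step 2: Prove $\mathrm{Int}(E,D)\subseteq D_{\mathfrak p}[X]$ under $(a)$, and then the localization statement.} Let $f\in\mathrm{Int}(E,D)$, write $f=\sum_{i=0}^n a_i X^i$ with $a_i\in K$. I want to show each $a_i\in D_{\mathfrak p}$. The standard trick is a Vandermonde/finite-differences argument: pick $n+1$ elements $e_0,\dots,e_n\in E$ lying in pairwise distinct cosets of $\mathfrak p$ — possible by $(a)$ — and such that moreover the differences $e_j-e_k$ are units modulo $\mathfrak p$, hence units in $D_{\mathfrak p}$, for $j\neq k$ (distinct cosets gives exactly $e_j-e_k\notin\mathfrak p$). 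Then the linear system $f(e_j)=\sum_i a_i e_j^i$ has Vandermonde coefficient matrix with determinant $\prod_{j<k}(e_k-e_j)$, a unit in $D_{\mathfrak p}$; Cramer's rule expresses each $a_i$ as a $D_{\mathfrak p}$-combination of the values $f(e_j)\in D\subseteq D_{\mathfrak p}$, so $a_i\in D_{\mathfrak p}$. Hence $\mathrm{Int}(E,D)\subseteq D_{\mathfrak p}[X]$, and localizing at $\mathfrak p$ gives $\mathrm{Int}(E,D)_{\mathfrak p}\subseteq D_{\mathfrak p}[X]$. The reverse inclusion $D_{\mathfrak p}[X]\subseteq \mathrm{Int}(E,D)_{\mathfrak p}$ follows because $D[X]\subseteq\mathrm{Int}(E,D)$ always, so $D_{\mathfrak p}[X]=D[X]_{\mathfrak p}\subseteq\mathrm{Int}(E,D)_{\mathfrak p}$; combined with the containment just shown we get equality. (Strictly, one should note $\mathfrak p$ makes sense as a prime of $\mathrm{Int}(E,D)$ via $\mathfrak p\mapsto \mathfrak p[X]=\mathfrak p\cap \mathrm{Int}(E,D)$, or localize at the multiplicative set $D\setminus\mathfrak p$; this is routine.)

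\medskip

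\emph{Step 3: The divided case — surjectivity and kernel of $\varphi$.} Now assume $\mathfrak p=\mathfrak pD_{\mathfrak p}$ is divided. First, $\varphi$ is well-defined: for $f\in\mathrm{Int}(E,D)\subseteq D_{\mathfrak p}[X]$ and $e\in E$, the reduction $\bar f$ evaluated at $\bar e=e+\mathfrak p$ lands in $D_{\mathfrak p}/\mathfrak pD_{\mathfrak p}$; since $f(e)\in D$ and the map $D/\mathfrak p\hookrightarrow D_{\mathfrak p}/\mathfrak pD_{\mathfrak p}$ is injective (in fact an isomorphism, as $\mathfrak p$ divided means $D_{\mathfrak p}/\mathfrak pD_{\mathfrak p}$ is the fraction field of $D/\mathfrak p$ — but actually we need that $f(e)+\mathfrak p D_{\mathfrak p}$ corresponds to an element of $D/\mathfrak p$, which it does since $f(e)\in D$), we get $\bar f(\bar e)\in D/\mathfrak p$, so $\bar f\in\mathrm{Int}(E/\mathfrak p,D/\mathfrak p)$. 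It is clearly a ring homomorphism. The kernel is $\{f\in\mathrm{Int}(E,D): f\in\mathfrak pD_{\mathfrak p}[X]\}=\mathrm{Int}(E,D)\cap\mathfrak pD_{\mathfrak p}[X]$; since $\mathfrak p$ is divided, $\mathfrak pD_{\mathfrak p}=\mathfrak p$, so this is $\{f=\sum a_iX^i\in\mathrm{Int}(E,D): a_i\in\mathfrak p\ \forall i\}=\mathfrak p[X]$ (note every such $f$ already lies in $D[X]\subseteq\mathrm{Int}(E,D)$). For surjectivity: given $\bar g\in\mathrm{Int}(E/\mathfrak p,D/\mathfrak p)$, write $\bar g=\sum \bar c_i X^i$ with $\bar c_i\in D_{\mathfrak p}/\mathfrak pD_{\mathfrak p}=\mathrm{qf}(D/\mathfrak p)$; I must lift it to some $f\in\mathrm{Int}(E,D)$.

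\medskip

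\emph{Main obstacle.} The genuinely delicate point is this surjectivity/lifting step: a naive coefficientwise lift of $\bar g$ need not send $E$ into $D$, only into $D_{\mathfrak p}$, so one must correct the lift. The mechanism is the isomorphism $\mathrm{Int}(E,D)_{\mathfrak p}=D_{\mathfrak p}[X]$ from Step 2 together with the divided hypothesis. Concretely: lift each $\bar c_i$ arbitrarily to $c_i\in D_{\mathfrak p}$, forming $g=\sum c_iX^i\in D_{\mathfrak p}[X]=\mathrm{Int}(E,D)_{\mathfrak p}$; then there is $s\in D\setminus\mathfrak p$ with $sg\in\mathrm{Int}(E,D)$, and since $s\notin\mathfrak p$, $\bar s$ is a unit in $\mathrm{qf}(D/\mathfrak p)$ — but we need $\overline{sg}=\bar s\,\bar g$ to still be $D/\mathfrak p$-valued on $E/\mathfrak p$, i.e. we want to recover $\bar g$ itself, which requires dividing by $\bar s$. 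The cleanest route is to argue directly that $\varphi$ composed with the localization map realizes $\mathrm{Int}(E,D)/\mathfrak p[X]$ as a subring of $\mathrm{qf}(D/\mathfrak p)[X]$ containing $(D/\mathfrak p)[X]$, and that an element $\bar g\in\mathrm{Int}(E/\mathfrak p,D/\mathfrak p)$ is hit by taking a $D[X]$-representative of its coefficients, clearing denominators to land in $\mathrm{Int}(E,D)$, and using that the obstruction to being $D$-valued (rather than $D_{\mathfrak p}$-valued) is exactly killed modulo $\mathfrak p$ because the ``denominators'' are units mod $\mathfrak p$. I would spell this out using the finite-differences formula from Step 2: the coefficients $c_i$ of any lift are $D_{\mathfrak p}$-combinations (with unit denominators) of values that can be adjusted mod $\mathfrak p$ to lie in $D$, and choosing those adjusted values as interpolation data at the $e_j$ produces an honest $f\in\mathrm{Int}(E,D)$ with $\varphi(f)=\bar g$. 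This interplay — that "divided'' lets us identify $D_{\mathfrak p}/\mathfrak pD_{\mathfrak p}$ with $\mathrm{qf}(D/\mathfrak p)$ and simultaneously $\mathfrak pD_{\mathfrak p}=\mathfrak p$ — is the crux of the whole lemma, and I expect it to occupy the bulk of the written proof.
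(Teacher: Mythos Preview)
Your treatment of part~(1) is correct and, by reducing $(b)$ and $(c)$ to $(a)$ and then running the Vandermonde argument explicitly, is in fact more self-contained than the paper's proof, which simply dispatches the three cases by citing \cite[Remarks~I.3.5(ii)]{C97}, \cite{FIKT00}, and \cite[Lemma~4(ii)]{Mu95} respectively. Your kernel computation in part~(2) also agrees with the paper's.

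The real problem is your surjectivity argument. You assert that ``a naive coefficientwise lift of $\bar g$ need not send $E$ into $D$, only into $D_{\mathfrak p}$, so one must correct the lift,'' and then sketch an elaborate interpolation scheme. This is a misdiagnosis: the naive lift \emph{does} work, in one line, and that is precisely the paper's argument. Given $\bar g\in\mathrm{Int}(E/\mathfrak p,D/\mathfrak p)$, choose any $f\in D_{\mathfrak p}[X]$ reducing to $\bar g$. For $e\in E$ one has $f(e)\in D_{\mathfrak p}$ and its class in $D_{\mathfrak p}/\mathfrak pD_{\mathfrak p}$ is $\bar g(\bar e)\in D/\mathfrak p$, so $f(e)\in D+\mathfrak pD_{\mathfrak p}$. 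Since $\mathfrak p$ is divided, $\mathfrak pD_{\mathfrak p}=\mathfrak p\subseteq D$, whence $D+\mathfrak pD_{\mathfrak p}=D$ and $f(e)\in D$. Thus $f\in\mathrm{Int}(E,D)$ with $\varphi(f)=\bar g$. You even record the identity $\mathfrak pD_{\mathfrak p}=\mathfrak p$ in your closing sentence but never feed it back into the lifting step. Moreover, your proposed interpolation fix is not only unnecessary but incomplete as written: forcing $f(e_j)\in D$ at $n+1$ chosen points does not by itself give $f(e)\in D$ for \emph{all} $e\in E$; to close that gap you would need exactly the observation $f(E)\subseteq D+\mathfrak pD_{\mathfrak p}=D$ above, at which point the interpolation was superfluous.
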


\begin{proof}
\rm(1) If (a) or (b) holds, the desired conclusion follows from \cite[Remarks I.3.5(ii)]{C97} and the comment preceding \cite[Lemma 3.1]{FIKT00}, respectively.\\
When (c) holds, \cite[Lemma 4(ii)]{Mu95} implies that $\mathrm{Int}(E, D_\mathfrak{p}) = D_\mathfrak{p}[X]$. Consequently, $\mathrm{Int}(E, D)$ is contained in $D_\mathfrak{p}[X]$ since $\mathrm{Int}(E, D_\mathfrak{p})$ is an overring of $\mathrm{Int}(E, D)$. 

\smallskip

\noindent \rm(2) Note first that, by the previous statement, we have $\mathrm{Int}(E, D) \subseteq D_{\mathfrak{p}}[X]$, which guarantees that the map $\varphi$ is well-defined. Now assume that $\mathfrak{p}$ is a divided prime ideal of $D$ and let $g \in \mathrm{Int}(E/\mathfrak{p}, D/\mathfrak{p})$. Then $g$ is the class of a polynomial $f$ of $D_{\mathfrak{p}}[X].$ In fact,  $f\in \mathrm{Int}(E,D)$ because $f(E)\subseteq D+\mathfrak{p}D_{\mathfrak{p}}$ and $\mathfrak{p}$ is divided. Moreover, the kernel of $\varphi$ is $\mathfrak{p}D_{\mathfrak{p}}[X] \cap \mathrm{Int}(E, D)$, which equals $\mathfrak{p}[X]$. This last equality follows from the fact that $\mathfrak{p}$ being divided implies $\mathfrak{p}D_{\mathfrak{p}}[X] \cap \mathrm{Int}(E, D) = \mathfrak{p}[X] \cap \mathrm{Int}(E, D) = \mathfrak{p}[X]$. Therefore, $\mathrm{Int}(E,D)/\mathfrak{p}[X]\cong\mathrm{Int}(E/\mathfrak{p},D/\mathfrak{p}).$
\end{proof}

Before presenting our first main result, we recall a well-known fact that will be used in the proof. Specifically, if $\mathfrak{p}$ is a divided prime ideal of an integral domain $D$, then $\dim(D_{\mathfrak{p}}[X]) = \mathrm{ht}_{D_\mathfrak{p}[X]}(\mathfrak{p}[X]) + 1.$ Indeed, let $\mathfrak{M}$ be a maximal ideal of $D_\mathfrak{p}[X]$ such that $\dim(D_{\mathfrak{p}}[X]) = \mathrm{ht}_{D_\mathfrak{p}[X]}(\mathfrak{M})$. Then, $\mathfrak{M} \cap D_\mathfrak{p}$ is a maximal ideal of $D_\mathfrak{p}$, and hence  $\mathfrak{M} \cap D_\mathfrak{p} = \mathfrak{p}D_\mathfrak{p} = \mathfrak{p}$ because $\mathfrak{p}$ is a divided prime ideal of $D$. Moreover, as $\mathfrak{p}[X]$ is not a maximal ideal of $D_\mathfrak{p}[X]$, it follows that $\mathfrak{M} \neq \mathfrak{p}[X].$ Therefore,  $\mathrm{ht}_{D_\mathfrak{p}[X]}(\mathfrak{M}) = \mathrm{ht}_{D_\mathfrak{p}[X]}(\mathfrak{p}[X]) + 1,$ which proves the desired equality.

\begin{theorem}\label{Divides2}
Under the same conditions of Lemma \ref{Triv}, we have  $$\dim(\mathrm{Int}(E,D))=\dim(D_{\mathfrak{p}}[X])+\dim(\mathrm{Int}(E/\mathfrak{p},D/\mathfrak{p}))-1.$$
Moreover, if $\mathfrak{p}$ is maximal, then $\dim(\mathrm{Int}(E,D))=\dim(D_{\mathfrak{p}}[X])$, and hence, $$\dim(\mathrm{Int}(E,D))\leqslant \dim(D[X]).$$
\end{theorem}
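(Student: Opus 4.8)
The plan is to reduce everything to the two pieces furnished by Lemma~\ref{Triv}, namely the localization $D_{\mathfrak p}[X]$ and the quotient $\mathrm{Int}(E/\mathfrak p,D/\mathfrak p)$. Throughout write $R:=\mathrm{Int}(E,D)$, $S:=D\setminus\mathfrak p$, $h:=\mathrm{ht}_D(\mathfrak p)$ and $d:=\dim\bigl(\mathrm{Int}(E/\mathfrak p,D/\mathfrak p)\bigr)$; recall that $\mathfrak p$ is divided and that by Lemma~\ref{Triv} one has $S^{-1}R=D_{\mathfrak p}[X]$, $\mathfrak pD_{\mathfrak p}[X]\cap R=\mathfrak p[X]$ and $R/\mathfrak p[X]\cong\mathrm{Int}(E/\mathfrak p,D/\mathfrak p)$. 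I would first record three dimension facts. (i)~$\dim(D_{\mathfrak p}[X])=h+1$, by the ``well-known fact'' recalled before the theorem together with $\mathrm{ht}_{D_{\mathfrak p}[X]}(\mathfrak pD_{\mathfrak p}[X])=\mathrm{ht}_{D_{\mathfrak p}}(\mathfrak pD_{\mathfrak p})=h$. (ii)~$\mathrm{ht}_R(\mathfrak p[X])=h$, because localization at $S$ restricts to an order isomorphism between the primes of $R$ contained in $\mathfrak p[X]$ (these automatically meet $D$ inside $\mathfrak p$, hence miss $S$) and the primes of $D_{\mathfrak p}[X]$ contained in $\mathfrak pD_{\mathfrak p}[X]$. (iii)~$\dim(R/\mathfrak p[X])=d$. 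The lower bound $\dim(R)\ge h+d$ then follows at once by concatenating a chain of length $h$ below $\mathfrak p[X]$ with the preimage of a chain of length $d$ in $R/\mathfrak p[X]$; and since $h+d=\dim(D_{\mathfrak p}[X])+\dim(\mathrm{Int}(E/\mathfrak p,D/\mathfrak p))-1$, only the reverse inequality $\dim(R)\le h+d$ remains.

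For the upper bound the decisive input is that $\mathfrak p$ is divided: if $Q\in\mathrm{Spec}(R)$ with $Q\cap D\not\subseteq\mathfrak p$, then $\mathfrak p\subsetneq Q\cap D$, and since $D[X]\subseteq R$ we get $\mathfrak p[X]\subseteq(Q\cap D)[X]\subseteq Q$. Contrapositively, any $Q\in\mathrm{Spec}(R)$ with $\mathfrak p[X]\not\subseteq Q$ satisfies $Q\cap D\subseteq\mathfrak p$, hence $Q\cap S=\emptyset$, so $Q$ extends to a prime $\widetilde Q:=S^{-1}Q$ of $D_{\mathfrak p}[X]$ with $\mathfrak pD_{\mathfrak p}[X]\not\subseteq\widetilde Q$; such a $\widetilde Q$ contracts into $D_{\mathfrak p}$ strictly below the maximal ideal $\mathfrak pD_{\mathfrak p}$, so $\mathrm{ht}_{D_{\mathfrak p}[X]}(\widetilde Q)\le(h-1)+1=h$ by the standard inequality $\mathrm{ht}_{A[X]}(\mathfrak q)\le\mathrm{ht}_A(\mathfrak q\cap A)+1$. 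Now take a maximal chain $Q_0\subsetneq Q_1\subsetneq\cdots\subsetneq Q_n$ in $\mathrm{Spec}(R)$; we may assume $h$ and $d$ finite (otherwise both sides are infinite) and $\mathfrak p\ne 0$ (if $\mathfrak p=0$ then $D_{\mathfrak p}[X]=K[X]$ and $\mathrm{Int}(E/\mathfrak p,D/\mathfrak p)=\mathrm{Int}(E,D)$, so the formula is trivial). If $\mathfrak p[X]\not\subseteq Q_i$ for every $i$, the height bound applied to $\widetilde Q_n$ gives $n\le h$. Otherwise let $c\ge1$ be least with $\mathfrak p[X]\subseteq Q_c$ --- here $c\ge1$ since $Q_0=(0)\subsetneq\mathfrak p[X]$ --- so that $\mathfrak p[X]\not\subseteq Q_{c-1}$.

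The one delicate point, which is the reason the bound comes out to be $h+d$ rather than $h+d+1$, is to split according to whether $Q_c=\mathfrak p[X]$ or $Q_c\supsetneq\mathfrak p[X]$; in each case exactly one of the two halves of the chain must be one shorter than expected. If $Q_c=\mathfrak p[X]$, then $Q_{c-1}\subsetneq\mathfrak p[X]$, so $\widetilde Q_{c-1}\subsetneq\mathfrak pD_{\mathfrak p}[X]$ and $\mathrm{ht}_{D_{\mathfrak p}[X]}(\widetilde Q_{c-1})\le h-1$; the chain $\widetilde Q_0\subsetneq\cdots\subsetneq\widetilde Q_{c-1}$ then yields $c-1\le h-1$, while $Q_c\subsetneq\cdots\subsetneq Q_n$ projects modulo $\mathfrak p[X]$ to a chain of length $n-c$ in $R/\mathfrak p[X]$, so $n-c\le d$, and altogether $n\le h+d$. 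If instead $Q_c\supsetneq\mathfrak p[X]$, then $\widetilde Q_{c-1}$ only satisfies $\mathrm{ht}_{D_{\mathfrak p}[X]}(\widetilde Q_{c-1})\le h$, so $c\le h+1$; but now $\mathfrak p[X]\subsetneq Q_c\subsetneq\cdots\subsetneq Q_n$ projects to a chain of length $n-c+1$ in $R/\mathfrak p[X]$, so $n-c+1\le d$, and again $n\le(h+1)+(d-1)=h+d$. This gives $\dim(R)\le h+d$, completing the proof of the first displayed equality.

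For the ``moreover'' part, if $\mathfrak p$ is maximal then $D/\mathfrak p$ is the field $\kappa(\mathfrak p)$ and $\mathrm{Int}(E/\mathfrak p,D/\mathfrak p)=\kappa(\mathfrak p)[X]$, so $d=1$ and the formula collapses to $\dim(\mathrm{Int}(E,D))=\dim(D_{\mathfrak p}[X])$; and since $D_{\mathfrak p}[X]$ is a localization of $D[X]$ (at $D\setminus\mathfrak p$), we conclude $\dim(\mathrm{Int}(E,D))=\dim(D_{\mathfrak p}[X])\le\dim(D[X])$. The principal obstacle in all of this is the refinement of the third paragraph: the naive estimate ``$\mathrm{ht}_R(\mathfrak p[X])$ plus $\dim(R/\mathfrak p[X])$'' overshoots by one, and sharpness is recovered only through the dichotomy at the first prime of the chain containing $\mathfrak p[X]$. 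The remaining ingredients --- the classical identities $\mathrm{ht}_{A[X]}(\mathfrak q[X])=\mathrm{ht}_A(\mathfrak q)$ and $\mathrm{ht}_{A[X]}(\mathfrak q)\le\mathrm{ht}_A(\mathfrak q\cap A)+1$, and the degenerate cases --- are routine.
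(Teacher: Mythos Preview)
Your overall chain-counting strategy is sound and is a genuinely different route from the paper (which invokes Cahen's pullback dimension formula \cite{C88}), but there is a real error in the height identities you treat as ``classical''. The equality $\mathrm{ht}_{A[X]}(\mathfrak q[X])=\mathrm{ht}_A(\mathfrak q)$ and the inequality $\mathrm{ht}_{A[X]}(P)\le \mathrm{ht}_A(P\cap A)+1$ are \emph{false} for general domains: for a one-dimensional local domain $(A,\mathfrak m)$ with $\dim(A[X])=3$ (Seidenberg's examples) one has $\mathrm{ht}_{A[X]}(\mathfrak m[X])=2\neq 1$, and a height-$3$ maximal ideal of $A[X]$ contracts to $\mathfrak m$ of height $1$. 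Since the whole point of the theorem is to cover domains that need not satisfy $\dim(D_{\mathfrak p}[X])=\mathrm{ht}_D(\mathfrak p)+1$, this is exactly the regime where your identity (i) fails. With your choice $h:=\mathrm{ht}_D(\mathfrak p)$ you would be proving $\dim(\mathrm{Int}(E,D))=\mathrm{ht}_D(\mathfrak p)+d$, which in general does \emph{not} coincide with $\dim(D_{\mathfrak p}[X])+d-1$.

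The repair is simple and preserves your argument intact: set $h:=\mathrm{ht}_{D_{\mathfrak p}[X]}(\mathfrak pD_{\mathfrak p}[X])$, so that the ``well-known fact'' before the theorem reads $\dim(D_{\mathfrak p}[X])=h+1$ on the nose. Fact (ii) becomes $\mathrm{ht}_R(\mathfrak p[X])=\mathrm{ht}_{D_{\mathfrak p}[X]}(\mathfrak pD_{\mathfrak p}[X])=h$ via your localization bijection, and the lower bound $\dim(R)\ge h+d$ follows as you wrote. For the upper bound, replace the false inequality by Jaffard's special-chain theorem \cite[Th\'eor\`eme~3]{J60}: for any prime $P$ of $A[X]$ one has $\mathrm{ht}_{A[X]}(P)\le \mathrm{ht}_{A[X]}\bigl((P\cap A)[X]\bigr)+1$. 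Applied to $\widetilde Q$ in $D_{\mathfrak p}[X]$ with $\widetilde Q\cap D_{\mathfrak p}=:\mathfrak q\subsetneq \mathfrak pD_{\mathfrak p}$, this gives $\mathrm{ht}_{D_{\mathfrak p}[X]}(\widetilde Q)\le \mathrm{ht}_{D_{\mathfrak p}[X]}(\mathfrak q[X])+1\le \bigl(\mathrm{ht}_{D_{\mathfrak p}[X]}(\mathfrak pD_{\mathfrak p}[X])-1\bigr)+1=h$, and your dichotomy in the third paragraph then goes through verbatim with this corrected $h$. (In the subcase $Q_c=\mathfrak p[X]$ you only need $\widetilde Q_{c-1}\subsetneq \mathfrak pD_{\mathfrak p}[X]\Rightarrow \mathrm{ht}(\widetilde Q_{c-1})\le h-1$, which is now immediate from the definition of $h$.) With these adjustments your direct argument is correct and, compared to the paper's appeal to \cite[Th\'eor\`eme~1]{C88}, has the virtue of being self-contained at the level of prime chains; the paper's approach, on the other hand, packages the case analysis into a single citation.
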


\begin{proof}
We will proceed similarly to the proof of \cite[Corollary 2.2]{T97}. Since $\mathfrak{p}$ is divided, the domains $D$ and $D_{\mathfrak{p}}$ share the ideal $\mathfrak{p}$, and so the following diagram is a pullback:
$$\xymatrix{
    D \ar[r] \ar[d]  & D/\mathfrak{p} \ar[d] \\
    D_{\mathfrak{p}} \ar[r] & D_{\mathfrak{p}}/\mathfrak{p}D_{\mathfrak{p}}.
  }$$  
Hence, the following diagram is also a pullback:
$$\xymatrix{
    D[X] \ar[r] \ar[d]  & D[X]/\mathfrak{p}[X] \ar[d] \\
    D_{\mathfrak{p}}[X] \ar[r] & D_{\mathfrak{p}}[X]/\mathfrak{p}D_{\mathfrak{p}}[X].
  }$$
By Lemma \ref{Triv}(1), we have the inclusion $\mathrm{Int}(E,D)\subseteq D_{\mathfrak{p}}[X],$ and thus we can write the following:
 $$\xymatrix{
    D[X] \ar[r] \ar[d]  & D[X]/\mathfrak{p}[X] \ar[d] \\
     \mathrm{Int}(E,D)\ar[r] \ar[d]  & \mathrm{Int}(E,D)/\left(\mathfrak{p}D_{\mathfrak{p}}[X]\cap \mathrm{Int}(E,D)\right) \ar[d]\\
    D_{\mathfrak{p}}[X] \ar[r] & D_{\mathfrak{p}}[X]/\mathfrak{p}D_{\mathfrak{p}}[X].
  }$$ 
Consider the set $\mathcal{P}$ of all prime ideals of $D_{\mathfrak{p}}[X]$ containing $\mathfrak{p}D_{\mathfrak{p}}[X].$ So, by \cite[Théorème 1]{C88}, we have
 $$\dim(\mathrm{Int}(E,D))\leqslant \sup_{{\mathfrak{Q}\in \mathcal{P}}}\lbrace\dim(D_{\mathfrak{p}}[X]),\mathrm{ht}_{D_{\mathfrak{p}}[X]}(\mathfrak{Q})+\dim(\mathrm{Int}(E,D)/\left(\mathfrak{Q}\cap \mathrm{Int}(E,D)\right) \rbrace.$$
Now, let $\mathfrak{Q}$ be an element of $\mathcal{P}$. We will then discuss the following two possible cases:

\medskip

\textbf{Case 1:} $\mathfrak{Q}=\mathfrak{p}[X].$ In this case, as noticed just after the proof of Lemma \ref{Triv}, we have $\dim(D_{\mathfrak{p}}[X])=\mathrm{ht}_{D_{\mathfrak{p}}[X]}(\mathfrak{Q})+1,$ and then $\mathrm{ht}_{D_{\mathfrak{p}}[X]}(\mathfrak{Q})=\dim(D_{\mathfrak{p}}[X])-1.$ It follows from Lemma \ref{Triv}(2) that
 $$\mathrm{Int}(E,D)/\mathfrak{Q}=\mathrm{Int}(E,D)/\left(\mathfrak{p}D_{\mathfrak{p}}[X]\cap \mathrm{Int}(E,D)\right)\cong\mathrm{Int}(E/\mathfrak{p},D/\mathfrak{p}),$$ and so  
$\dim(\mathrm{Int}(E,D)/\mathfrak{Q})=\dim(\mathrm{Int}(E/\mathfrak{p},D/\mathfrak{p})).$
Therefore,
 $$\mathrm{ht}_{D_{\mathfrak{p}}[X]}(\mathfrak{Q})+\dim(\mathrm{Int}(E,D)/\mathfrak{Q})=\dim(D_{\mathfrak{p}}[X])-1+\dim(\mathrm{Int}(E/\mathfrak{p},D/\mathfrak{p})).$$

\textbf{Case 2:} $\mathfrak{Q}$ strictly contains $\mathfrak{p}[X]$. In this case, $\mathfrak{Q}$ is maximal, and then  $\mathrm{ht}_{D_{\mathfrak{p}}[X]}(\mathfrak{Q})\leqslant\dim(D_{\mathfrak{p}}[X])$ and $\dim(\mathrm{Int}(E,D)/\left(\mathfrak{Q}\cap \mathrm{Int}(E,D)\right))\leqslant \dim(\mathrm{Int}(E/\mathfrak{p},D/\mathfrak{p}))-1.$ Hence, $$\mathrm{ht}_{D_{\mathfrak{p}}[X]}(\mathfrak{Q})+\dim(\mathrm{Int}(E,D)/\left(\mathfrak{Q}\cap \mathrm{Int}(E,D)\right))\leqslant \dim(D_{\mathfrak{p}}[X])-1+\dim(\mathrm{Int}(E/\mathfrak{p},D/\mathfrak{p})).$$

Consequently, we obtain the first inequality, $$\dim(\mathrm{Int}(E,D))\leqslant \dim(D_{\mathfrak{p}}[X])-1+\dim(\mathrm{Int}(E/\mathfrak{p},D/\mathfrak{p})).$$

\medskip

For the other inequality, we have
\begin{align*}
\dim(\mathrm{Int}(E,D))&\geqslant \dim(\mathrm{Int}(E,D)/\mathfrak{p}[X])+\mathrm{ht}_{\mathrm{Int}(E,D)}(\mathfrak{p}[X])\\
&=\dim(\mathrm{Int}(E/\mathfrak{p},D/\mathfrak{p}))+\mathrm{ht}_{\mathrm{Int}(E,D)}(\mathfrak{p}[X])
\end{align*}
 
Since $\mathrm{ht}_{\mathrm{Int}(E,D)}(\mathfrak{p}[X])\geqslant \mathrm{ht}_{D_{\mathfrak{p}}[X]}(\mathfrak{p}[X]),$ we deduce that  $$\dim(\mathrm{Int}(E,D))\geqslant \dim(D_{\mathfrak{p}}[X])-1+\dim(\mathrm{Int}(E/\mathfrak{p},D/\mathfrak{p})),$$ and so the equality is proved. For the moreover statement, since $\mathfrak{p}$ is maximal, $D/\mathfrak{p}$ is a field, and then  $\mathrm{Int}(E/\mathfrak{p},D/\mathfrak{p})=(D/\mathfrak{p})[X].$ Thus, $\dim(\mathrm{Int}(E/\mathfrak{p},D/\mathfrak{p}))=\dim((D/\mathfrak{p})[X])=1,$ and therefore $\dim(\mathrm{Int}(E,D))=\dim(D_{\mathfrak{p}}[X])$, which implies that $\dim(\mathrm{Int}(E,D))\leqslant \dim(D[X])$. 
\end{proof}

\medskip

To compute or bound the Krull dimension of $\mathrm{Int}(D)$, or more generally $\mathrm{Int}(E,D)$, it can sometimes be useful to find an extension $B$ of $D$ such that $\mathrm{Int}(E,D) \subseteq B[X]$.

Our next result shows that, under certain conditions, this inclusion guarantees that $\mathrm{Int}(E,D)$ and $B[X]$ have the same Krull dimension. In particular, this extends \cite[Proposition V.4.4]{C97} to the case of $\mathrm{Int}(E,D)$ and simultaneously generalizes \cite [Proposition 1.12]{FIKT97}. Furthermore, we note that the ring $I(B, A)$ studied in \cite{FIKT97} can be viewed as the $\mathrm{Int}(D)$-analogue of the ring $\mathrm{Int}_B(E, D)$ investigated later in this paper, which is closely related to the next theorem.  Before proceeding, we recall a crucial fact from \cite[Section V.4, page 111]{C97} (see also  \cite[pages 337 and 338]{C90}): if $R \subset T$ is a pair of integral domains sharing a nonzero ideal $J$, then there exists a one-to-one correspondence between the prime ideals of $R$ that do not contain $J$ and the prime ideals of $T$ that do not contain $J$. Specifically, each such prime ideal of $T$ corresponds to its contraction in $R$.

\begin{theorem}\label{pairs domain}
Let $D \subset B$ be a pair of integral domains sharing an ideal $I$ such that $D/I$ is finite, and let $E$ be a nonempty subset of $D$. If $\mathrm{Int}(E,D) \subset B[X]$, then $\mathrm{Int}(E,D)$ and $B[X]$ have the same Krull dimension.
\end{theorem}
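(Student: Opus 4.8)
The plan is to exploit the shared-ideal structure together with the finiteness of $D/I$. First I would observe that since $\mathrm{Int}(E,D) \subset B[X]$ and $B[X]$ is an overring of $\mathrm{Int}(E,D)$ contained in $K[X]$, we have $\dim(\mathrm{Int}(E,D)) \geqslant \dim(B[X])$ automatically, so the real content is the reverse inequality $\dim(\mathrm{Int}(E,D)) \leqslant \dim(B[X])$. The key structural fact is that $D$ and $B$ share the ideal $I$, hence $D[X]$ and $B[X]$ share the ideal $I[X]$, and since $\mathrm{Int}(E,D)$ sits between $D[X]$ and $B[X]$ (note $D[X] \subseteq \mathrm{Int}(E,D)$ always), the rings $\mathrm{Int}(E,D)$ and $B[X]$ also share the ideal $I[X]$.

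Next I would split the prime spectrum of $\mathrm{Int}(E,D)$ according to whether a prime contains $I[X]$ or not. For primes $\mathfrak{Q}$ of $\mathrm{Int}(E,D)$ \emph{not} containing $I[X]$, the one-to-one correspondence recalled just before the theorem (from \cite[Section V.4]{C97}) matches them with primes of $B[X]$ not containing $I[X]$, and this correspondence preserves inclusions, so any chain of such primes in $\mathrm{Int}(E,D)$ lifts to a chain of the same length in $B[X]$; thus chains avoiding $I[X]$ contribute at most $\dim(B[X])$. For primes $\mathfrak{Q}$ \emph{containing} $I[X]$, I would pass to the quotient: $\mathrm{Int}(E,D)/I[X]$ maps into $B[X]/I[X] = (B/I)[X]$, and since $D/I$ is finite, $\mathrm{Int}(E/(E\cap I\cdots),\,D/I)$-type reasoning shows that $\mathrm{Int}(E,D)/I[X]$ is an intermediate ring between $(D/I)[X]$ and $(B/I)[X]$; because $D/I$ is a finite ring (in particular Noetherian of dimension $0$ or an Artinian ring, reduced since it embeds in the domain $B/I$, hence a finite product of finite fields — actually $D/I$ need not be a domain, but its total quotient behavior is controlled), one gets $\dim(\mathrm{Int}(E,D)/I[X])$ bounded by $1$ plus the dimension of $D/I$, which is small. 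The cleanest route: use a Jaffard-type or pullback height estimate (as in the proof of Theorem \ref{Divides2} via \cite[Théorème 1]{C88}) on the pullback square relating $D[X]$, $\mathrm{Int}(E,D)$, $B[X]$ and their quotients mod $I[X]$.

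Concretely, I would set up the pullback
$$\xymatrix{
    \mathrm{Int}(E,D) \ar[r] \ar[d]  & \mathrm{Int}(E,D)/I[X] \ar[d] \\
    B[X] \ar[r] & (B/I)[X]
  }$$
and apply the dimension formula of Cahen \cite[Théorème 1]{C88}: $\dim(\mathrm{Int}(E,D))$ is bounded above by the supremum over primes $\mathfrak{Q}$ of $B[X]$ containing $I[X]$ of $\max\{\dim(B[X]),\ \mathrm{ht}_{B[X]}(\mathfrak{Q}) + \dim(\mathrm{Int}(E,D)/(\mathfrak{Q}\cap \mathrm{Int}(E,D)))\}$. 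For such $\mathfrak{Q}$, the quotient $\mathrm{Int}(E,D)/(\mathfrak{Q}\cap\mathrm{Int}(E,D))$ is a quotient of $\mathrm{Int}(E,D)/I[X]$, hence of an intermediate ring between $(D/I)[X]$ and $(B/I)[X]$; since $D/I$ is finite this quotient is integral over a field (the image of $D/I$ modulo a maximal ideal) adjoined $X$, so it has dimension at most $1$ and in fact at most $\dim((B/I)[X]) - \mathrm{ht}(\bar{\mathfrak{Q}})$. Adding $\mathrm{ht}_{B[X]}(\mathfrak{Q})$ back and using $\mathrm{ht}_{B[X]}(\mathfrak{Q}) + \dim(B[X]/\mathfrak{Q}) \leqslant \dim(B[X])$ recovers the bound $\dim(\mathrm{Int}(E,D)) \leqslant \dim(B[X])$.

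The main obstacle I anticipate is the bookkeeping for primes containing $I[X]$: one must verify that $\mathrm{Int}(E,D)/I[X]$ really is squeezed between $(D/I)[X]$ and $(B/I)[X]$ (this uses that $I$ is a shared ideal, so $I[X] = I B[X] \cap \mathrm{Int}(E,D)$ and the quotient map is well-behaved), and then genuinely control its Krull dimension using only the finiteness of $D/I$ — the point being that a finite ring has dimension $0$, so its polynomial ring has dimension $1$, and an intermediate ring between $(D/I)[X]$ and $(B/I)[X]$ inherits a dimension bound of $1 + \dim((D/I)[X]) $-type estimates, but one must be careful that $B/I$ can itself have large dimension and that $(B/I)[X]$ might contribute. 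The resolution is that the primes of $B[X]$ containing $I[X]$ already carry whatever height $\dim(B/I)[X]$ provides inside $B[X]$, so there is no double-counting, and the pullback formula makes this precise. I would also double-check the easy inequality direction and the reduction $D[X] \subseteq \mathrm{Int}(E,D) \subseteq B[X]$ at the outset, since the whole argument hinges on $\mathrm{Int}(E,D)$ being an intermediate ring in a pair sharing $I[X]$.
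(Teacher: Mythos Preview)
Your claimed ``easy'' direction is wrong, and this is the main gap. You assert that because $B[X]$ is an overring of $\mathrm{Int}(E,D)$ we automatically have $\dim(\mathrm{Int}(E,D)) \geqslant \dim(B[X])$, but Krull dimension does \emph{not} decrease along overrings in general. A one-dimensional PVD that is not Jaffard (e.g.\ $D=k+Zk(Y)[Z]_{(Z)}$, as in the paper's last example) admits a two-dimensional valuation overring, so an overring can be strictly higher-dimensional. In the paper this inequality is the one that takes real work: one picks, via Jaffard's theorem, a maximal chain in $B[X]$ whose penultimate term is an extended prime $\mathfrak{q}[X]$; then one observes that the monic polynomial $\prod_{i=1}^{r}(X-u_i)$ (with $\{u_i\}$ a full set of representatives of $D/I$) lies in $\mathrm{Int}(E,I)$, so no $\mathfrak{q}[X]$ can contain the common ideal $\mathrm{Int}(E,I)$; hence the whole chain up to $\mathfrak{q}[X]$ lies in the region where the shared-ideal prime correspondence is a bijection and contracts without collapse to $\mathrm{Int}(E,D)$. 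Your proposal contains nothing that replaces this step.

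For the inequality $\dim(\mathrm{Int}(E,D))\leqslant\dim(B[X])$ your pullback idea is in the right spirit but you make it harder than necessary by using $I[X]$. The quotient $\mathrm{Int}(E,D)/I[X]$ is \emph{not} zero-dimensional (it contains $(D/I)[X]$, which already has dimension $1$), so you are forced into the delicate bookkeeping you describe, and your key estimate $\dim\bigl(\mathrm{Int}(E,D)/(\mathfrak{Q}\cap\mathrm{Int}(E,D))\bigr)\leqslant\dim((B/I)[X])-\mathrm{ht}(\bar{\mathfrak{Q}})$ is asserted without justification. The paper avoids all of this by taking the larger shared ideal $\mathrm{Int}(E,I)$ instead of $I[X]$: then \cite[Lemma 1.1]{T97} gives $\dim\bigl(\mathrm{Int}(E,D)/\mathrm{Int}(E,I)\bigr)=0$ directly (the argument is that every element of $\mathrm{Int}(E,D)$ is congruent to some $u_i$ modulo any prime containing $\mathrm{Int}(E,I)$), and Cahen's \cite[Corollaire 1 du Th\'eor\`eme 1]{C88} immediately yields the inequality.
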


\begin{proof}
Assume that $\mathrm{Int}(E,D) \subset B[X]$. Then, from the fact that $\mathrm{Int}(E,I)$ is a common ideal of $\mathrm{Int}(E,D)$ and $B[X]$, we can consider the following pullback diagram:
$$\xymatrix{
    \mathrm{Int}(E,D) \ar[r] \ar[d]  & \mathrm{Int}(E,D)/\mathrm{Int}(E,I) \ar[d] \\
    B[X] \ar[r] & B[X]/\mathrm{Int}(E,I).
  }$$    
By \cite[Lemma 1.1]{T97}, we have $\dim( \mathrm{Int}(E,D)/\mathrm{Int}(E,I))=0$, and so it follows from \cite[Corollaire 1 du Théorème 1, page 509]{C88} that $\dim(\mathrm{Int}(E,D))\leqslant\dim(B[X]).$

For the reverse inequality, we will treat the following two possible cases. 

\textbf{Case 1:} $\dim(B[X])=n$. Then, by \cite[Théorème 3, page 35]{J60}, there exists a chain of prime ideals of length $n$ in $B[X]$, ending with $\mathfrak{p}_{n-1}:=\mathfrak{q}[X]\subset \mathfrak{p}_n$, where $\mathfrak{p}_n\cap B=\mathfrak{q}$. Let $\lbrace u_1, \ldots, u_r \rbrace$ denote a complete set of representatives of $D$ modulo $I$. The monic polynomial $f=\prod_{i=1}^{r}(X-u_i)$ belongs to $\int(E,I)$ (in fact, $f$ belongs, a fortiori,  to $\int(D,I)$), and thus, $\mathfrak{q}[X]$ does not contain $\mathrm{Int}(E,I),$ as it does not contain any monic polynomial. Consequently, the considered chain in $B[X]$ contracts to a chain of prime ideals of length $n$ in $\int(E,D)$, and therefore, $n\leqslant\dim(\int(E,D)).$

\textbf{Case 2:} $\dim(B[X])$ is infinite. In this case, $\dim(\mathrm{Int}(E,D))$ is also infinite since a chain of arbitrary length ending at some prime ideal $\mathfrak{p}[X]$ in $B[X]$ contracts to a chain of the same length in $\int(E,D)$. 

Therefore, we conclude that $\dim(\mathrm{Int}(E,D))=\dim(B[X])$.
\end{proof}

We next provide conditions on both $B$ and $E$ that ensure the inclusion $\mathrm{Int}(E,D) \subseteq B[X]$, thereby enabling the application of Theorem \ref{pairs domain}.
\begin{corollary}\label{pairs domainII}
Let $D \subset B$ be a pair of integral domains sharing an ideal $I$ such that $D/I$ is finite and $E$ a nonempty subset of $D$. Assume that there exists a family $\mathcal{P}$ of prime ideals of $B$ that have infinite residue fields and do not contain $I$  such that $B = \cap_{\mathfrak{p}\in\mathcal{P}} B_{\mathfrak{p}}$. If, in addition, that $E$ satisfies one of the following three conditions:
\begin{enumerate}[\hspace*{0.5cm} $(a)$]
\item $E$ meets infinitely many cosets of $\mathfrak{p}$, for each $\mathfrak{p}\in\mathcal{P}.$
\item $E$ is an ideal of $B$ containing $\mathfrak{p}$, for each $\mathfrak{p}\in\mathcal{P}.$
\item $E$ is residually cofinite with $B.$
\end{enumerate}
Then, $\dim(\mathrm{Int}(E,D))=\dim(B[X]).$ 
\end{corollary}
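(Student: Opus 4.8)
The plan is to reduce Corollary \ref{pairs domainII} to Theorem \ref{pairs domain} by verifying its single nontrivial hypothesis, namely the inclusion $\mathrm{Int}(E,D) \subseteq B[X]$. Once this inclusion is established, Theorem \ref{pairs domain} applies verbatim (the pair $D \subset B$ shares the ideal $I$ with $D/I$ finite, and $E$ is a nonempty subset of $D$), yielding $\dim(\mathrm{Int}(E,D)) = \dim(B[X])$. So the whole argument is about producing that inclusion from the stated conditions on $\mathcal{P}$ and on $E$.

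First I would fix $f \in \mathrm{Int}(E,D) \subseteq K[X]$ and show $f \in B[X]$; since $B = \bigcap_{\mathfrak{p}\in\mathcal{P}} B_{\mathfrak{p}}$, it suffices to show $f \in B_{\mathfrak{p}}[X]$ for each $\mathfrak{p} \in \mathcal{P}$. Fix such a $\mathfrak{p}$. The point is that each of the three conditions $(a)$, $(b)$, $(c)$ is exactly what is needed to apply Lemma \ref{Triv}(1) with the prime $\mathfrak{p}$ of $B$ in place of the prime of $D$ there: $\mathfrak{p}$ has infinite residue field by hypothesis, and $E$ — viewed as a subset of $B$ (recall $E \subseteq D \subseteq B$) — satisfies the corresponding hypothesis of Lemma \ref{Triv}. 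Hence Lemma \ref{Triv}(1) gives $\mathrm{Int}(E,B) \subseteq B_{\mathfrak{p}}[X]$, and in particular $\mathrm{Int}(E,B_{\mathfrak{p}}) = B_{\mathfrak{p}}[X]$. Then I would observe that $\mathrm{Int}(E,D) \subseteq \mathrm{Int}(E,B)$ (since $f(E) \subseteq D \subseteq B$), so $f \in B_{\mathfrak{p}}[X]$. Intersecting over all $\mathfrak{p} \in \mathcal{P}$ yields $f \in \bigcap_{\mathfrak{p}} B_{\mathfrak{p}}[X] = \big(\bigcap_{\mathfrak{p}} B_{\mathfrak{p}}\big)[X] = B[X]$, as desired.

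The main obstacle — really the only place requiring care — is making sure that conditions $(a)$, $(b)$, $(c)$ of Corollary \ref{pairs domainII} genuinely translate into the hypotheses of Lemma \ref{Triv} relative to the overring $B$ and its prime $\mathfrak{p}$, rather than relative to $D$. For $(a)$ this is immediate: "$E$ meets infinitely many cosets of $\mathfrak{p}$" is a statement about $E$ and the ideal $\mathfrak{p}$ of $B$, and it is the literal hypothesis $(a)$ of Lemma \ref{Triv}. For $(b)$, one needs $E$ to be an ideal of $B$ containing $\mathfrak{p}$ — again the literal hypothesis $(b)$ of Lemma \ref{Triv} applied to $B$. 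For $(c)$, residual cofiniteness of $E$ with $B$ is exactly hypothesis $(c)$ of Lemma \ref{Triv} applied to $B$; here one should note (as in the proof of Lemma \ref{Triv}(1), case (c)) that Mulay's \cite[Lemma 4(ii)]{Mu95} gives $\mathrm{Int}(E, B_{\mathfrak{p}}) = B_{\mathfrak{p}}[X]$, and then $\mathrm{Int}(E,B) \subseteq \mathrm{Int}(E,B_{\mathfrak{p}}) = B_{\mathfrak{p}}[X]$ because $\mathrm{Int}(E,B_{\mathfrak{p}})$ is an overring of $\mathrm{Int}(E,B)$. In every case the residue field hypothesis on $\mathfrak{p}$ is supplied directly by the statement.

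Finally I would close the loop: having shown $\mathrm{Int}(E,D) \subseteq B[X]$ under any one of $(a)$, $(b)$, $(c)$, I invoke Theorem \ref{pairs domain} with the pair $D \subset B$, the shared ideal $I$, and the finite quotient $D/I$, to conclude $\dim(\mathrm{Int}(E,D)) = \dim(B[X])$. One small bookkeeping remark worth including is that the inclusion $\mathrm{Int}(E,D) \subseteq B[X]$ should be proper for Theorem \ref{pairs domain} to apply as literally stated (it is phrased with $\subset$); this is automatic here since $B \neq D$ forces $B[X] \neq \mathrm{Int}(E,D)$ (indeed $\mathrm{Int}(E,D) \subseteq K[X]$ need not contain $B[X]$, but more directly $\mathrm{Int}(E,D) \cap B = D$-valued constants lie in $D \subsetneq B$), so the strictness is harmless and can be noted in one line.
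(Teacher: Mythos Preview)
Your proposal is correct and follows essentially the same route as the paper: you establish $\mathrm{Int}(E,D)\subseteq \mathrm{Int}(E,B)\subseteq B_{\mathfrak{p}}[X]$ for each $\mathfrak{p}\in\mathcal{P}$ via Lemma~\ref{Triv}(1) applied to $B$, intersect to get $\mathrm{Int}(E,D)\subseteq B[X]$, check strictness, and invoke Theorem~\ref{pairs domain}. The paper additionally records the side observation $D_{\mathfrak{p}}=B_{\mathfrak{p}}$ (since $I\not\subseteq\mathfrak{p}$), but this is not actually needed in the argument, and your handling of the strict inclusion via constants is equivalent to the paper's.
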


\begin{proof}
For each $\mathfrak{p}\in\mathcal{P}$, we have $D_{\mathfrak{p}}= B_{\mathfrak{p}}$ since the complement of $\mathfrak{p}$ meets the ideal $I$. It follows from Lemma \ref{Triv}(1) that $\mathrm{Int}(E,B)_\mathfrak{p}=B_{\mathfrak{p}}[X],$ and then $\mathrm{Int}(E,D)\subseteq \mathrm{Int}(E,B)_{\mathfrak{p}}=B_{\mathfrak{p}}[X]$  because $\mathrm{Int}(E,D)\subseteq \mathrm{Int}(E,B)$. Thus, $\mathrm{Int}(E,D)\subseteq \cap_{\mathfrak{p}\in \mathcal{P}}B_{\mathfrak{p}}[X]=B[X].$
This inclusion is actually strict. Indeed, if not, the equality $\mathrm{Int}(E,D)=B[X]$ implies that $B$ is an overring of $D,$ and hence, by taking the intersection with the quotient field of $D,$ we obtain $D=B$, which contradicts the assumption that $D\neq B.$ \\ 
Therefore, by Theorem \ref{pairs domain}, we conclude that  $\dim(\mathrm{Int}(E,D)) = \dim(B[X]).$
\end{proof}

To present our second corollary, it is convenient to recall some few concepts. An integral domain $D$ is said to be an \textit{almost Krull} domain if the localization of $D$ at any maximal ideal is a Krull domain. Clearly, Krull domains are almost Krull domains. For an integral domain $D$, the $t$-\textit{operation} is defined by $I_t:=\bigcup (J^{-1})^{-1}$, where $J$ ranges over the set of all nonzero finitely generated ideals contained in $I$. A nonzero ideal $I$ of $D$ is called a $t$-\textit{prime ideal} if it is prime and $I_t=I$.  It is worth noting that any height-one prime is $t$-prime. When each $t$-prime ideal of $D$ has height one, we say that $D$ has $t$-\textit{dimension one} and we denote by $t$-$\dim(D)=1$. Notice that Krull domains and one-dimensional domains have $t$-dimension one. Additionally, it is known that if $D$ an integral domain distinct from its quotient field that is either almost Krull or has $t$-dimension one, then $D=\cap_{\mathfrak{p}\in X^1(D)}D_\mathfrak{p},$ where $X^1(D)$ denotes the set of all height-one prime ideals of $D$.

\begin{corollary}\label{CorBKrull}
Let $D \subset B$ be a pair of integral domains sharing an ideal $I$ such that $D/I$ is finite and $E$ a nonempty subset of $D$. If $B=\cap_{\mathfrak{p}\in X^1(B)}B_\mathfrak{p}$ (this holds, for example, when $B$ is almost Krull or $(t\text{-})\dim(B)=1$) and $\mathrm{ht}_{B}(I)\geqslant2,$ the residue field of each height-one prime ideal of $B$ is infinite and that $E$ verifies one of the  conditions of  Corollary \ref{pairs domainII}. Then, $\dim(\mathrm{Int}(E,D))=\dim(B[X])$.
\end{corollary}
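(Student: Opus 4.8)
The plan is to obtain this statement as a direct specialization of Corollary \ref{pairs domainII}, choosing for the family $\mathcal{P}$ the set $X^1(B)$ of all height-one prime ideals of $B$. Thus the whole task reduces to checking, one at a time, that $\mathcal{P} = X^1(B)$ fulfils the hypotheses needed to apply that corollary.

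First I would note that each $\mathfrak{p} \in X^1(B)$ has infinite residue field, which is part of the hypotheses. Next, I would observe that no element of $X^1(B)$ can contain $I$: indeed, if a prime $\mathfrak{p}$ of $B$ contained $I$, then $\mathrm{ht}_B(\mathfrak{p}) \geqslant \mathrm{ht}_B(I) \geqslant 2$, so $\mathfrak{p} \notin X^1(B)$. Then I would invoke the representation $B = \cap_{\mathfrak{p}\in X^1(B)} B_\mathfrak{p}$, which is exactly the standing hypothesis; as recalled in the paragraph just before the statement, it holds automatically whenever $B$ (necessarily distinct from its quotient field here, since it has a prime ideal of height at least $2$) is almost Krull or has $(t\text{-})\dim(B) = 1$, which accounts for the parenthetical examples. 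Finally, $E$ satisfies one of the conditions $(a)$, $(b)$, $(c)$ of Corollary \ref{pairs domainII} by assumption.

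Once these points are established, Corollary \ref{pairs domainII} applies directly and gives $\dim(\mathrm{Int}(E,D)) = \dim(B[X])$, which is the desired conclusion. I do not expect a genuine obstacle here; the proof is essentially bookkeeping. The single point deserving attention is the role of the hypothesis $\mathrm{ht}_B(I) \geqslant 2$: it is precisely what forces the height-one primes of $B$ to avoid $I$, hence to satisfy $D_\mathfrak{p} = B_\mathfrak{p}$, which is the mechanism that drives Corollary \ref{pairs domainII}; the only other delicate step is citing correctly the recalled fact that almost Krull domains and domains of $t$-dimension one admit the intersection representation over $X^1$.
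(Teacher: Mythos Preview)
Your proposal is correct and follows exactly the paper's approach: the paper's proof is the single line ``This is an application of Corollary~\ref{pairs domainII} with $\mathcal{P}=X^1(B)$,'' and your write-up simply unpacks why each hypothesis of that corollary is met. In particular, your explanation that $\mathrm{ht}_B(I)\geqslant 2$ is precisely what ensures that no $\mathfrak{p}\in X^1(B)$ contains $I$ is the only verification the paper leaves implicit.
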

\begin{proof}
This is an application of Corollary \ref{pairs domainII} with $\mathcal{P}=X^1(B).$
\end{proof}

As an explicit illustrative example, we retake  \cite[Example V.4.3]{C97} with two indeterminates, as follows:
\begin{example}
Let $F$ be a field containing a proper finite subfield $k.$ Let us consider the following two domains $B=F[u,v]$ and $D=k+I$, where $u$ and $v$ are two indeterminates over $F,$ and $I=(u,v)$. Let $E$ be a nonempty subset of $D$ that satisfies one of the conditions mentioned in Corollary \ref{pairs domainII}. As noted in \cite[Example V.4.3]{C97}, $D \subset B$ forms a pair of integral domains sharing the ideal $I$. Since $B$ is a Noetherian Krull domain and $I$ has height two in $B$, it follows from Corollary \ref{CorBKrull} that $\dim(\mathrm{Int}(E,D))=\dim(B[X])=1+\dim(B)=1+2=3$.
\end{example}

Recall that an integral domain $D$ is said to be of \textit{finite character}, if every nonzero element of $D$ belongs to only finitely many maximal ideals of $D$. We next provide a generalization of \cite[Proposition 1.7]{T97}.

\begin{theorem}\label{lpsvlf}
Let $D$ be an integral domain with finite residue fields and $E$ a nonempty subset of $D$. Assume that $D$ has finite character and for each maximal ideal $\mathfrak{m}$ of $D$, the integral domain $D_\mathfrak{m}$ shares the maximal ideal $\mathfrak{m}D_\mathfrak{m}$ with an overring $W_\mathfrak{m}$, such that $\mathfrak{m}D_\mathfrak{m}$ is a prime ideal in $W_\mathfrak{m}$. Then, $$ \dim(\mathrm{Int}(E,D))\leqslant \sup\lbrace 1+\dim_v(W_{\mathfrak{m}});\;\mathfrak{m}\in \mathrm{Max}(D) \rbrace.$$
\end{theorem}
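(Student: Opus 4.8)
The plan is to reduce the global statement to a local one via the finite character hypothesis, and then to bound each local dimension using the valuative dimension of the overring $W_{\mathfrak{m}}$. First I would recall that for a domain $D$ of finite character one has the localization formula for integer-valued polynomials: since $E$ is a subset of $D$ with finite residue fields, the ring $\mathrm{Int}(E,D)$ behaves well under localization at maximal ideals, so that $\mathrm{Int}(E,D)_{\mathfrak{m}} = \mathrm{Int}(E, D_{\mathfrak{m}})$ for each $\mathfrak{m} \in \mathrm{Max}(D)$ (this is exactly where finiteness of the residue fields and finite character are used — see \cite[Section IV.1 and Proposition V.1.5]{C97}). Consequently $\dim(\mathrm{Int}(E,D)) = \sup\{\dim(\mathrm{Int}(E, D_{\mathfrak{m}})) ;\ \mathfrak{m} \in \mathrm{Max}(D)\}$, and it suffices to prove, for a fixed $\mathfrak{m}$, that $\dim(\mathrm{Int}(E, D_{\mathfrak{m}})) \leqslant 1 + \dim_v(W_{\mathfrak{m}})$.

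So I fix a maximal ideal $\mathfrak{m}$ and work with $D_{\mathfrak{m}}$, which is local with maximal ideal $\mathfrak{m}D_{\mathfrak{m}}$ and shares this ideal with the overring $W_{\mathfrak{m}}$, in which $\mathfrak{m}D_{\mathfrak{m}}$ is prime. Writing $R := D_{\mathfrak{m}}$, $W := W_{\mathfrak{m}}$ and $\mathfrak{q} := \mathfrak{m}D_{\mathfrak{m}}$, the residue field $R/\mathfrak{q}$ is finite, and the inclusion $\mathrm{Int}(E,R) \subseteq W[X]$ should hold: indeed $E \subseteq R \subseteq W$ and any $f \in K[X]$ with $f(E) \subseteq R$ has, evaluated on $E$, image in $R \subseteq W$, so $f \in \mathrm{Int}(E, W)$; but since $\mathfrak{q}$ is a common ideal of $R$ and $W$ with $R/\mathfrak{q}$ finite, the monic polynomial obtained from a complete set of coset representatives of $R$ modulo $\mathfrak{q}$ lies in $\mathrm{Int}(E,\mathfrak{q})$, which forces $\mathrm{Int}(E,W)$ — hence $\mathrm{Int}(E,R)$ — into $W[X]$ (this is the same mechanism as in the proof of Theorem \ref{pairs domain}). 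With $\mathrm{Int}(E,R) \subseteq W[X]$ in hand, and $\mathrm{Int}(E,\mathfrak{q})$ a common ideal, I set up the pullback square
$$\xymatrix{
    \mathrm{Int}(E,R) \ar[r] \ar[d]  & \mathrm{Int}(E,R)/\mathrm{Int}(E,\mathfrak{q}) \ar[d] \\
    W[X] \ar[r] & W[X]/\mathrm{Int}(E,\mathfrak{q}),
  }$$
and apply \cite[Théorème 1]{C88} (together with the zero-dimensionality of $\mathrm{Int}(E,R)/\mathrm{Int}(E,\mathfrak{q})$ furnished by \cite[Lemma 1.1]{T97}, as in Theorem \ref{pairs domain}) to conclude $\dim(\mathrm{Int}(E,R)) \leqslant \dim(W[X])$.

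It then remains to bound $\dim(W[X])$ by $1 + \dim_v(W)$. This is the standard inequality $\dim(W[X]) \leqslant 1 + \dim_v(W)$, valid for any integral domain (see \cite[Theorem 2 and Corollary, Section 4]{J60} or \cite[Chapter V]{C97}); combining it with the previous step gives $\dim(\mathrm{Int}(E, D_{\mathfrak{m}})) \leqslant 1 + \dim_v(W_{\mathfrak{m}})$, and taking the supremum over $\mathfrak{m} \in \mathrm{Max}(D)$ yields the claim. The main obstacle I anticipate is the first step: carefully justifying the localization identity $\mathrm{Int}(E,D)_{\mathfrak{m}} = \mathrm{Int}(E, D_{\mathfrak{m}})$ under the stated hypotheses (finite character plus finite residue fields), since in general $\mathrm{Int}(E,D)$ does not commute with localization — this is precisely the delicacy alluded to in the introduction. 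Everything afterwards is a routine reuse of the pullback machinery already developed for Theorem \ref{pairs domain}.
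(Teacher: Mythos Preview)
Your plan has a genuine gap, and it is not the localization identity you already flagged --- it is the claimed inclusion $\mathrm{Int}(E,R)\subseteq W[X]$ (with $R=D_{\mathfrak m}$, $W=W_{\mathfrak m}$, $\mathfrak q=\mathfrak m D_{\mathfrak m}$). This inclusion is simply false in general. For a cheap counterexample take $R=W=\mathbb{Z}_{(p)}$, $\mathfrak q=p\mathbb{Z}_{(p)}$ and $E=\{0\}$: then $\mathrm{Int}(E,R)=\mathbb{Z}_{(p)}+X\mathbb{Q}[X]\not\subseteq \mathbb{Z}_{(p)}[X]$. Your justification invokes the monic polynomial built from representatives of $R/\mathfrak q$ ``as in Theorem \ref{pairs domain}'', but in that theorem the inclusion $\mathrm{Int}(E,D)\subset B[X]$ is a \emph{hypothesis}; the monic polynomial is only used there to show that chains in $B[X]$ ending at an upper $\mathfrak q[X]$ survive contraction, not to force coefficients of integer-valued polynomials into $B$. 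So the pullback square you write down is not available, and the argument collapses at this step regardless of whether the localization identity holds.

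The paper sidesteps both obstacles. First, it never asserts $\mathrm{Int}(E,D)_{\mathfrak m}=\mathrm{Int}(E,D_{\mathfrak m})$; it only uses the general fact (Lemma \ref{LemLocalDim}) that $\dim(\mathrm{Int}(E,D))=\sup_{\mathfrak m}\dim(\mathrm{Int}(E,D)_{\mathfrak m})$. Second, instead of trying to embed anything into $W_{\mathfrak m}[X]$, it introduces the auxiliary global overring $C(\mathfrak m)=W_{\mathfrak m}\cap\bigcap_{\mathfrak m'\neq\mathfrak m}D_{\mathfrak m'}$ of Lemma \ref{tlf}, which shares the ideal $\mathfrak m$ with $D$ and satisfies $C(\mathfrak m)_{\mathfrak m}=W_{\mathfrak m}$. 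Then $\mathrm{Int}(E,D)_{\mathfrak m}$ and $\mathrm{Int}(E,C(\mathfrak m))_{\mathfrak m}$ share $\mathrm{Int}(E,\mathfrak m)_{\mathfrak m}$, the zero-dimensional quotient argument (Lemma \ref{lemIneq}) gives $\dim(\mathrm{Int}(E,D)_{\mathfrak m})\leqslant\dim(\mathrm{Int}(E,C(\mathfrak m))_{\mathfrak m})$, and finally the inclusion runs the \emph{other} way: $W_{\mathfrak m}[X]=C(\mathfrak m)_{\mathfrak m}[X]\subseteq \mathrm{Int}(E,C(\mathfrak m))_{\mathfrak m}$, so the latter is an overring of $W_{\mathfrak m}[X]$ and hence has Krull dimension at most $\dim_v(W_{\mathfrak m}[X])=1+\dim_v(W_{\mathfrak m})$. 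The point is that valuative dimension decreases along overrings, so one only needs $W_{\mathfrak m}[X]$ \emph{inside} the ring to be bounded, which is automatic; your approach needed the reverse inclusion, which fails.
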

 
To prove this result, we first need some preliminary lemmas.

\begin{lemma}[{\cite[Lemma 1.6]{T97}}]\label{tlf}
Under the notation and assumptions of Theorem \ref{lpsvlf} with $D$ is not necessarily assumed to have finite residue fields, we fix a maximal ideal $\mathfrak{m}_{0}$ of $D$ and we set $C=C(\mathfrak{m}_{0})=W_{\mathfrak{m}_{0}}\cap (\cap_{\mathfrak{m}\neq \mathfrak{m}_{0}}D_\mathfrak{m}).$ Then, $\mathfrak{m}_{0}$ is a prime ideal in $C$ and $C_{\mathfrak{m}_{0}}=W_{\mathfrak{m}_{0}}.$
\end{lemma}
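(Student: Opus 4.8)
The plan is to view $C$ as the overring of $D$ obtained from the canonical representation $D=\bigcap_{\mathfrak{m}\in\mathrm{Max}(D)}D_{\mathfrak{m}}$ by enlarging the single factor $D_{\mathfrak{m}_0}$ to $W_{\mathfrak{m}_0}$; thus $C=W_{\mathfrak{m}_0}\cap D'$ where $D'=\bigcap_{\mathfrak{m}\neq\mathfrak{m}_0}D_{\mathfrak{m}}$, and in particular $C\subseteq W_{\mathfrak{m}_0}$. Write $\mathfrak{M}=\mathfrak{m}_0 D_{\mathfrak{m}_0}$ for the ideal shared by $D_{\mathfrak{m}_0}$ and $W_{\mathfrak{m}_0}$. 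For the first assertion I would identify $\mathfrak{m}_0$ with a contraction: since $\mathfrak{M}$ is a prime ideal of $W_{\mathfrak{m}_0}$ and $C\subseteq W_{\mathfrak{m}_0}$, the ideal $\mathfrak{M}\cap C$ is prime in $C$, and it equals $\mathfrak{m}_0$. Indeed $\mathfrak{m}_0\subseteq\mathfrak{M}\cap C$ is immediate, while the inclusion $\mathfrak{M}\cap C\subseteq D_{\mathfrak{m}_0}\cap D'=D$ forces $\mathfrak{M}\cap C\subseteq\mathfrak{M}\cap D=\mathfrak{m}_0$. Hence $\mathfrak{m}_0$ is a prime ideal of $C$.

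For the equality $C_{\mathfrak{m}_0}=W_{\mathfrak{m}_0}$ I would prove the two inclusions separately. The inclusion $C_{\mathfrak{m}_0}\subseteq W_{\mathfrak{m}_0}$ is the formal one: since $C\subseteq W_{\mathfrak{m}_0}$, it suffices that every $s\in C\setminus\mathfrak{m}_0$ be a unit of $W_{\mathfrak{m}_0}$; but $s\notin\mathfrak{m}_0=\mathfrak{M}\cap C$ means $s\in W_{\mathfrak{m}_0}\setminus\mathfrak{M}$, and as $\mathfrak{M}$ is the maximal ideal of $W_{\mathfrak{m}_0}$ (the sense in which $W_{\mathfrak{m}_0}$ shares the maximal ideal $\mathfrak{M}$) such $s$ is invertible there. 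Thus $C_{\mathfrak{m}_0}=(C\setminus\mathfrak{m}_0)^{-1}C\subseteq W_{\mathfrak{m}_0}$.

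The substantive inclusion is $W_{\mathfrak{m}_0}\subseteq C_{\mathfrak{m}_0}$. Given $w\in W_{\mathfrak{m}_0}$, I must find $s\in C\setminus\mathfrak{m}_0$ with $sw\in C$. Here finite character enters: the conductor $(D:_D w)=\{d\in D:\,dw\in D\}$ is a nonzero ideal contained in every maximal ideal $\mathfrak{m}$ for which $w\notin D_{\mathfrak{m}}$, so only finitely many such $\mathfrak{m}$ occur; list those distinct from $\mathfrak{m}_0$ as $\mathfrak{m}_1,\dots,\mathfrak{m}_k$. For every other $\mathfrak{m}\neq\mathfrak{m}_0$ we have $w\in D_{\mathfrak{m}}$, so it is enough to produce $s\in D\setminus\mathfrak{m}_0$ with $sw\in D_{\mathfrak{m}_i}$ for $i=1,\dots,k$: then $sw\in\bigcap_{\mathfrak{m}\neq\mathfrak{m}_0}D_{\mathfrak{m}}=D'$ and $sw\in W_{\mathfrak{m}_0}$, hence $sw\in C$ and $w=(sw)/s\in C_{\mathfrak{m}_0}$.

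The main obstacle is precisely the production of such an $s$, i.e. showing that each ideal $A_i:=\{d\in D:\,dw\in D_{\mathfrak{m}_i}\}$ is not contained in $\mathfrak{m}_0$; this is where the shared-ideal hypothesis is used in an essential way. The crucial observation I would isolate is that $w\in D_{\mathfrak{p}}$ for every prime $\mathfrak{p}\subsetneq\mathfrak{m}_0$: choosing $t\in\mathfrak{M}\setminus\mathfrak{p}D_{\mathfrak{m}_0}$ (possible since $\mathfrak{p}D_{\mathfrak{m}_0}\subsetneq\mathfrak{M}$), the element $t$ is a unit of $D_{\mathfrak{p}}=(D_{\mathfrak{m}_0})_{\mathfrak{p}D_{\mathfrak{m}_0}}$ while $tw\in\mathfrak{M}w\subseteq\mathfrak{M}\subseteq D_{\mathfrak{p}}$ because $\mathfrak{M}$ is an ideal of $W_{\mathfrak{m}_0}$; hence $w=(tw)t^{-1}\in D_{\mathfrak{p}}$. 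Since $D_{\mathfrak{m}_0}D_{\mathfrak{m}_i}$ is the intersection of the rings $D_{\mathfrak{p}}$ as $\mathfrak{p}$ runs over the primes $\mathfrak{p}\subseteq\mathfrak{m}_0\cap\mathfrak{m}_i$ (all of which are properly contained in $\mathfrak{m}_0$), this yields $w\in D_{\mathfrak{m}_0}D_{\mathfrak{m}_i}$. Writing $w=n/\delta$ with $n\in D$ and $\delta$ a product of elements of $D$ each lying outside $\mathfrak{m}_0$ or outside $\mathfrak{m}_i$, and collecting the factors outside $\mathfrak{m}_0$ into $\delta'$ and those outside $\mathfrak{m}_i$ into $\delta''$, I get $\delta'\notin\mathfrak{m}_0$, $\delta''\notin\mathfrak{m}_i$, and $\delta'w=n/\delta''\in D_{\mathfrak{m}_i}$, so $\delta'\in A_i\setminus\mathfrak{m}_0$. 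Finally, as $\mathfrak{m}_0$ is prime and no $A_i$ lies in it, the product $A_1\cdots A_k$ (and hence $\bigcap_iA_i$) escapes $\mathfrak{m}_0$, and any $s\in\bigcap_iA_i\setminus\mathfrak{m}_0$ completes the argument. I expect the verification that $A_i\not\subseteq\mathfrak{m}_0$ — equivalently $w\in D_{\mathfrak{m}_0}D_{\mathfrak{m}_i}$ — to be the delicate point, the remainder being bookkeeping with finite character.
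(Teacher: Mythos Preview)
The paper does not supply a proof of this lemma at all: it is quoted verbatim as \cite[Lemma~1.6]{T97} and used as a black box in the proof of Theorem~\ref{lpsvlf}. So there is no ``paper's own proof'' to compare against; what you have written is an independent reconstruction of Tartarone's argument.

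Your reconstruction is careful and the substantive part---the inclusion $W_{\mathfrak{m}_0}\subseteq C_{\mathfrak{m}_0}$ via finite character and the shared-ideal trick $w\in D_{\mathfrak{p}}$ for every $\mathfrak{p}\subsetneq\mathfrak{m}_0$---is correct and is exactly where the real work lies. One point deserves a flag. For the easy inclusion $C_{\mathfrak{m}_0}\subseteq W_{\mathfrak{m}_0}$ you invoke that $\mathfrak{M}=\mathfrak{m}_0D_{\mathfrak{m}_0}$ is the \emph{maximal} ideal of $W_{\mathfrak{m}_0}$, reading ``shares the maximal ideal'' in that strong sense. The hypothesis in Theorem~\ref{lpsvlf}, however, only asks that $\mathfrak{M}$ be a \emph{prime} ideal of $W_{\mathfrak{m}_0}$; the phrase ``maximal ideal'' there refers to its status in $D_{\mathfrak{m}_0}$, and the added clause ``such that $\mathfrak{m}D_{\mathfrak{m}}$ is a prime ideal in $W_{\mathfrak{m}}$'' would be redundant under your reading. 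If $\mathfrak{M}$ is merely prime and not maximal in $W_{\mathfrak{m}_0}$, then an element of $C\setminus\mathfrak{m}_0$ lying in a maximal ideal of $W_{\mathfrak{m}_0}$ above $\mathfrak{M}$ would not be a unit there, and in fact your own argument shows $(D\setminus\mathfrak{m}_0)^{-1}C=W_{\mathfrak{m}_0}$, whence $C_{\mathfrak{m}_0}=(W_{\mathfrak{m}_0})_{\mathfrak{M}}$, which strictly contains $W_{\mathfrak{m}_0}$ in that case. In Tartarone's original setting (and in the locally-PVD applications of Corollaries~\ref{LPVDs} and \ref{psvf}) $\mathfrak{M}$ \emph{is} maximal in $W_{\mathfrak{m}_0}$, so your reading matches the intended use; just be aware that under the literal hypotheses written in Theorem~\ref{lpsvlf} this step needs that extra assumption.
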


The following result follows from \cite[Proposition 1.2]{T97} and its proof.
\begin{lemma}\label{lemIneq}
Let $D\subset B$ be integral domains  sharing a common nonzero ideal $I$ and $E$ a nonempty subset of $K.$ If $D/I$ is finite, then $$\dim(\mathrm{Int}(E,D))\leqslant \dim(\mathrm{Int}(E,B)).$$
\end{lemma}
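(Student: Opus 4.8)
The plan is to imitate the first half of the proof of Theorem \ref{pairs domain}, replacing the overring $B[X]$ used there with $\mathrm{Int}(E,B)$. The whole argument rests on exhibiting a common ideal large enough to make one quotient zero-dimensional, and then invoking the pullback dimension inequality \cite[Corollaire 1 du Théorème 1, page 509]{C88}. First I would record the basic inclusions: since $D$ and $B$ share the nonzero ideal $I$, picking $0\neq a\in I$ shows that every $b\in B$ satisfies $ab\in I\subseteq D$, whence $b\in\mathrm{qf}(D)$; thus $D$ and $B$ have the same quotient field $K$, and both $\mathrm{Int}(E,D)$ and $\mathrm{Int}(E,B)$ live inside $K[X]$ with $\mathrm{Int}(E,D)\subseteq\mathrm{Int}(E,B)$.

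Next I would check that $\mathrm{Int}(E,I):=\{f\in K[X];\ f(E)\subseteq I\}$ is a common nonzero ideal of $\mathrm{Int}(E,D)$ and of $\mathrm{Int}(E,B)$. It is nonzero since any nonzero constant drawn from $I$ lies in it, and it is an ideal of $\mathrm{Int}(E,B)$ (hence a fortiori of $\mathrm{Int}(E,D)$) because for $f\in\mathrm{Int}(E,I)$ and $g\in\mathrm{Int}(E,B)$ one has $(fg)(e)=f(e)g(e)\in IB\subseteq I$ for every $e\in E$, using that $I$ is an ideal of $B$. This yields the pullback square
$$\xymatrix{
    \mathrm{Int}(E,D) \ar[r] \ar[d]  & \mathrm{Int}(E,D)/\mathrm{Int}(E,I) \ar[d] \\
    \mathrm{Int}(E,B) \ar[r] & \mathrm{Int}(E,B)/\mathrm{Int}(E,I),
  }$$
in which the horizontal arrows are the canonical surjections and the vertical arrows the induced inclusions, $\mathrm{Int}(E,D)$ being precisely the preimage in $\mathrm{Int}(E,B)$ of its image modulo $\mathrm{Int}(E,I)$. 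The crucial input is then that the hypothesis $D/I$ finite forces $\dim(\mathrm{Int}(E,D)/\mathrm{Int}(E,I))=0$, which is exactly \cite[Lemma 1.1]{T97} as already used in Theorem \ref{pairs domain}. With this zero-dimensionality in hand, \cite[Corollaire 1 du Théorème 1, page 509]{C88} applied to the square above gives $\dim(\mathrm{Int}(E,D))\leqslant\dim(\mathrm{Int}(E,B))$, the desired inequality.

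I expect the only genuinely delicate point to be the bookkeeping required to invoke the pullback dimension result: one must confirm that the square is honestly a pullback of rings and that the hypotheses of \cite[Corollaire 1 du Théorème 1, page 509]{C88} are satisfied, namely that the upper-right quotient is zero-dimensional. Everything else is the same formal machinery underlying Theorem \ref{pairs domain}; indeed the present lemma is, in effect, that theorem's first inequality with the overring $B[X]$ systematically replaced by $\mathrm{Int}(E,B)$, so no new phenomenon beyond the substitution of the ambient overring should arise.
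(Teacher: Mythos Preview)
Your proposal is correct and matches the paper's approach: the paper itself does not spell out a proof but simply cites \cite[Proposition 1.2]{T97} and its proof, whose content is precisely the pullback argument you give (shared ideal $\mathrm{Int}(E,I)$, zero-dimensionality of $\mathrm{Int}(E,D)/\mathrm{Int}(E,I)$ via \cite[Lemma 1.1]{T97}, and Cahen's inequality \cite[Corollaire 1 du Th\'eor\`eme 1]{C88}). Indeed, the paper later reproduces exactly this argument in the more general setting of Theorem~\ref{main1}, so your write-up is essentially the specialization of that proof to $B=K$.
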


\begin{lemma}\label{LemLocalDim}
For any subset $E$ of $D$, we have $$\dim(\mathrm{Int}(E,D))=\sup\left\{\dim(\mathrm{Int}(E,D)_\mathfrak{m});\,\mathfrak{m}\in\mathrm{Max}(D)\right\}.$$ 
\end{lemma}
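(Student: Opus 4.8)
The plan is to deduce the statement from the standard fact that for any commutative ring $R$ one has $\dim(R)=\sup\{\mathrm{ht}_R(\mathfrak{M});\ \mathfrak{M}\in\mathrm{Max}(R)\}$ together with $\mathrm{ht}_R(\mathfrak{M})=\dim(R_{\mathfrak{M}})$. I would apply this with $R:=\mathrm{Int}(E,D)$, and compare the localizations of $R$ at its own maximal ideals with the localizations $R_{\mathfrak{m}}:=(D\setminus\mathfrak{m})^{-1}R$ indexed by the maximal ideals $\mathfrak{m}$ of $D$.

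One inequality is immediate. For every $\mathfrak{m}\in\mathrm{Max}(D)$ the set $D\setminus\mathfrak{m}$ is a multiplicative subset of $R$, so $R_{\mathfrak{m}}=\mathrm{Int}(E,D)_{\mathfrak{m}}$ is a ring of fractions of $R$ and hence $\dim(\mathrm{Int}(E,D)_{\mathfrak{m}})\leqslant\dim(\mathrm{Int}(E,D))$; taking the supremum over $\mathfrak{m}$ yields $\sup_{\mathfrak{m}\in\mathrm{Max}(D)}\dim(\mathrm{Int}(E,D)_{\mathfrak{m}})\leqslant\dim(\mathrm{Int}(E,D))$.

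For the reverse inequality, I would fix a maximal ideal $\mathfrak{M}$ of $R$ and set $\mathfrak{p}:=\mathfrak{M}\cap D$, which is a prime ideal of $D$; then choose a maximal ideal $\mathfrak{m}$ of $D$ with $\mathfrak{p}\subseteq\mathfrak{m}$. The key observation is that $D\setminus\mathfrak{m}$ is disjoint not only from $\mathfrak{M}$ but from every prime ideal $\mathfrak{q}$ of $R$ contained in $\mathfrak{M}$, since $\mathfrak{q}\cap D\subseteq\mathfrak{M}\cap D=\mathfrak{p}\subseteq\mathfrak{m}$. Hence extension to $R_{\mathfrak{m}}$ induces an inclusion-preserving bijection between the primes of $R$ below $\mathfrak{M}$ and the primes of $R_{\mathfrak{m}}$ below $\mathfrak{M}R_{\mathfrak{m}}$, so that $\mathrm{ht}_R(\mathfrak{M})=\mathrm{ht}_{R_{\mathfrak{m}}}(\mathfrak{M}R_{\mathfrak{m}})\leqslant\dim(R_{\mathfrak{m}})\leqslant\sup\{\dim(\mathrm{Int}(E,D)_{\mathfrak{m}});\ \mathfrak{m}\in\mathrm{Max}(D)\}$. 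Since $\mathfrak{M}$ was an arbitrary maximal ideal of $R$ and $\dim(R)$ is the supremum of the heights $\mathrm{ht}_R(\mathfrak{M})$, this gives $\dim(\mathrm{Int}(E,D))\leqslant\sup\{\dim(\mathrm{Int}(E,D)_{\mathfrak{m}});\ \mathfrak{m}\in\mathrm{Max}(D)\}$, and combining the two inequalities completes the proof.

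The only point that requires any care — and I expect it to be the sole genuine step — is the height-preservation claim used above. It is essential that we localize $R$ at the multiplicative set $D\setminus\mathfrak{m}$ coming from $D$, rather than at a multiplicative set of $R$ itself: it is precisely the order-preserving contraction $\mathfrak{q}\mapsto\mathfrak{q}\cap D$ together with the inclusion $\mathfrak{q}\cap D\subseteq\mathfrak{m}$ for all $\mathfrak{q}\subseteq\mathfrak{M}$ that guarantees no prime below $\mathfrak{M}$ is lost when passing to $R_{\mathfrak{m}}$. Note that $\mathrm{Int}(E,D)$ is in general not integral over $D$, so one cannot appeal to lying-over or going-up; the argument above avoids this and relies only on the elementary behaviour of contraction of primes.
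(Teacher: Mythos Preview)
Your proof is correct. The paper does not give its own argument for this lemma but simply observes that it is a particular case of \cite[Lemma~1.2]{Tam21}; your write-up supplies the standard direct proof that underlies that reference. In fact the argument you give uses nothing about $\mathrm{Int}(E,D)$ specifically: it shows more generally that for any extension $D\subseteq R$ of commutative rings one has $\dim(R)=\sup\{\dim\big((D\setminus\mathfrak{m})^{-1}R\big);\ \mathfrak{m}\in\mathrm{Max}(D)\}$, which is precisely the level of generality of the cited lemma.
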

\begin{proof}
This is a particular case of \cite[Lemma 1.2]{Tam21}.
\end{proof}

\begin{proof}[Proof of Theorem \ref{lpsvlf}]
Let $\mathfrak{m}$ be a maximal ideal of $D$. It follows from  Lemma \ref{tlf} that $C(\mathfrak{m})$ and $D$ share the ideal $\mathfrak{m}$, and then $\mathrm{Int}(E,D)$ and $\mathrm{Int}(E,C(\mathfrak{m}))$ share the ideal $\mathrm{Int}(E,\mathfrak{m})$.  Thus, $\mathrm{Int}(E,D)_{\mathfrak{m}}$ and $\mathrm{Int}(E,C(\mathfrak{m}))_{\mathfrak{m}}$ share the ideal $\mathrm{Int}(E,\mathfrak{m})_{\mathfrak{m}}$. Since $D/\mathfrak{m}$ is finite, we have  $\dim(\mathrm{Int}(E,D)_{\mathfrak{m}})\leqslant \dim(\mathrm{Int}(E,C(\mathfrak{m}))_{\mathfrak{m}})$ as asserted Lemma \ref{lemIneq}. From the inclusion $C(\mathfrak{m})[X]\subseteq \mathrm{Int}(E,C(\mathfrak{m}))$ and by using  Lemma \ref{tlf}, we deduce that $(C(\mathfrak{m}))_{\mathfrak{m}}[X]=W_{\mathfrak{m}}[X]\subseteq \mathrm{Int}(E,C(\mathfrak{m}))_{\mathfrak{m}},$ and hence, $\dim(\mathrm{Int}(E,D)_{\mathfrak{m}})\leqslant \dim(\mathrm{Int}(E,C(\mathfrak{m}))_{\mathfrak{m}})\leqslant \dim_{v}(W_{\mathfrak{m}}[X])=1+\dim_{v}(W_{\mathfrak{m}}).$ Thus, the thesis follows from Lemma \ref{LemLocalDim}.
\end{proof}

Following \cite{HH78}, an integral domain $D$ is called a \textit{pseudo-valuation domain} (in short, a PVD) if every prime ideal $\mathfrak{p}$ has the property that whenever a product of two elements of the quotient field of $D$ lies in $\mathfrak{p},$ then one of the given elements is in $\mathfrak{p}$. In \cite{HH78}, the authors showed that valuation domains form a proper subclass of PVDs. Additionally, they established that if a PVD $D$ that is not a valuation domain $D$ must be a local domain whose maximal ideal $\mathfrak{m}$ is shared with a unique valuation overring $V$, known as the \textit{associated valuation domain} of $D$.  Lastly, recall that an integral domain $D$ is said to be \textit{locally PVD} if $D_\mathfrak{m}$ is a PVD for each maximal ideal $\mathfrak{m}$ of $D$. Note that while PVDs and Pr\"ufer domains are locally PVD, a locally PVD  need not be a PVD or a Pr\"ufer domain, as illustrated in \cite[Example 2.5]{DF83}.

\begin{corollary}\label{LPVDs}
Let $D$ be a locally PVD of finite character and $E$ is a  nonempty subset of $D$. If $D$ has finite residue fields, then $$\dim(\mathrm{Int}(E,D))=1+\dim(D).$$ 
\end{corollary}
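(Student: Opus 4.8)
The plan is to prove the two inequalities $\dim(\mathrm{Int}(E,D)) \geq 1 + \dim(D)$ and $\dim(\mathrm{Int}(E,D)) \leq 1 + \dim(D)$ separately. Only the upper bound will use the hypotheses of the corollary (locally PVD, finite character, finite residue fields); the lower bound holds for an arbitrary nonempty subset $E$ of $D$.

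For the lower bound I would fix an element $a \in E$ and consider the evaluation homomorphism $\mathrm{ev}_a \colon \mathrm{Int}(E,D) \to D$, $f \mapsto f(a)$. Since $a \in E \subseteq D$ it is well defined, and it is surjective because it restricts to the identity on the constants $D \subseteq \mathrm{Int}(E,D)$. Hence $\mathfrak{P}_a := \ker(\mathrm{ev}_a)$ is a prime ideal with $\mathrm{Int}(E,D)/\mathfrak{P}_a \cong D$, and $\mathfrak{P}_a \neq (0)$ since it contains $X - a \in D[X] \subseteq \mathrm{Int}(E,D)$. Concatenating a saturated chain of primes below $\mathfrak{P}_a$ (of length $\mathrm{ht}(\mathfrak{P}_a) \geq 1$) with the preimage of a chain of length $\dim(D)$ in $\mathrm{Int}(E,D)/\mathfrak{P}_a$, one gets $\dim(\mathrm{Int}(E,D)) \geq \mathrm{ht}(\mathfrak{P}_a) + \dim(D) \geq 1 + \dim(D)$.

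For the upper bound I would check that the hypotheses of Theorem \ref{lpsvlf} are satisfied. Fix a maximal ideal $\mathfrak{m}$ of $D$; since $D$ is locally PVD, $D_\mathfrak{m}$ is a PVD, so by the Hedstrom--Houston structure result recalled above it shares its maximal ideal $\mathfrak{m}D_\mathfrak{m}$ with a valuation overring $W_\mathfrak{m}$ (take $W_\mathfrak{m} = D_\mathfrak{m}$ if $D_\mathfrak{m}$ is already a valuation domain, and the associated valuation domain otherwise), and $\mathfrak{m}D_\mathfrak{m}$ is the maximal ideal of $W_\mathfrak{m}$, in particular prime. Together with the hypotheses that $D$ has finite character and finite residue fields, Theorem \ref{lpsvlf} gives $\dim(\mathrm{Int}(E,D)) \leq \sup\{1 + \dim_v(W_\mathfrak{m}) : \mathfrak{m} \in \mathrm{Max}(D)\}$. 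It then remains to see that $\dim_v(W_\mathfrak{m}) = \dim(D_\mathfrak{m})$: as $W_\mathfrak{m}$ is a valuation domain, $\dim_v(W_\mathfrak{m}) = \dim(W_\mathfrak{m})$, and as $W_\mathfrak{m}$ and $D_\mathfrak{m}$ share the ideal $\mathfrak{m}D_\mathfrak{m}$, which is the maximal ideal of both, the one-to-one correspondence between their primes not containing $\mathfrak{m}D_\mathfrak{m}$ (recalled before Theorem \ref{pairs domain}) yields $\dim(W_\mathfrak{m}) = \mathrm{ht}_{W_\mathfrak{m}}(\mathfrak{m}D_\mathfrak{m}) = \mathrm{ht}_{D_\mathfrak{m}}(\mathfrak{m}D_\mathfrak{m}) = \dim(D_\mathfrak{m})$. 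Since $\dim(D) = \sup\{\dim(D_\mathfrak{m}) : \mathfrak{m} \in \mathrm{Max}(D)\}$, this gives $\dim(\mathrm{Int}(E,D)) \leq 1 + \dim(D)$, and combining with the lower bound completes the proof.

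The whole argument is essentially a bookkeeping deduction from Theorem \ref{lpsvlf}, so I do not expect a serious obstacle; the only point requiring care is the identity $\dim_v(W_\mathfrak{m}) = \dim(D_\mathfrak{m})$, that is, the fact that replacing the local PVD $D_\mathfrak{m}$ by the valuation overring with which it shares its maximal ideal alters neither the Krull nor the valuative dimension --- this rests on remembering that a PVD and its associated valuation domain have the same maximal ideal (hence the same prime spectrum below it) and on the shared-ideal prime correspondence.
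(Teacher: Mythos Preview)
Your proof is correct and follows essentially the same route as the paper: the upper bound is obtained from Theorem~\ref{lpsvlf} exactly as the authors do (taking $W_\mathfrak{m}$ to be the associated valuation overring and using $\dim_v(W_\mathfrak{m})=\dim(W_\mathfrak{m})=\dim(D_\mathfrak{m})$), and your evaluation-at-$a$ argument for the lower bound is precisely the content of \cite[Proposition V.1.5]{C97}, which the paper simply cites. The only difference is that you spell out the equality $\dim(W_\mathfrak{m})=\dim(D_\mathfrak{m})$ via the shared-ideal prime correspondence, whereas the paper states it without justification (it is a standard fact about PVDs that they have the same spectrum as their associated valuation domain).
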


\begin{proof}
In this situation, for each maximal ideal $\mathfrak{m}$ of $D_{\mathfrak{m}}$, we have $D_{\mathfrak{m}}$ is a PVD with finite residue field, and let $W_{\mathfrak{m}}$ denote its associated valuation domain. Then,  $\dim_v(W_{\mathfrak{m}}) = \dim(W_{\mathfrak{m}}) = \dim(D_{\mathfrak{m}}),$ and hence it follows from Theorem \ref{lpsvlf} that
$$\dim(\mathrm{Int}(E,D)) \leqslant \sup\left\{ 1 + \dim(D_{\mathfrak{m}}); \, \mathfrak{m} \in \mathrm{Max}(D) \right\} = 1 + \dim(D).$$
For the other inequality, it is well-known for any fractional subset $E$ of $D$; see \cite[Proposition V.1.5]{C97}, and so the desired equality follows.
\end{proof}

The following result extends Theorem \ref{lpsvlf} by removing the hypothesis that $D$ must have finite residue fields. Let us first introduce the notation: $\mathcal{M}_0$  denotes the set of all maximal ideals of $D$ with finite residue fields, while $\mathcal{M}_1$ denotes the set of those with infinite residue fields.

\begin{proposition}\label{PropM}
Under the notation and assumptions of Theorem \ref{lpsvlf} with $D$ is not necessarily assumed to have finite residue fields. Assume that, for each maximal ideal $\mathfrak{m}$ of $D$ with infinite residue field, the subset $E$ satisfies one of the following three conditions:
\begin{enumerate}[\hspace*{0.5cm} $(a)$]
\item $E$ meets infinitely many cosets of $\mathfrak{m}$.
\item $E$ is an ideal of $D$ contains $\mathfrak{m}$.
\item $E$ is residually cofinite with $D$.
\end{enumerate}
Then, we have : $$\dim(\mathrm{Int}(E,D))\leqslant \sup \lbrace\sup\lbrace 1+\dim_v(W_{\mathfrak{m}});\;\mathfrak{m}\in\mathcal{M}_0\rbrace , \sup\lbrace \dim(D_{\mathfrak{m}}[X]);\;\mathfrak{m}\in\mathcal{M}_1\rbrace \rbrace.$$ 
\end{proposition}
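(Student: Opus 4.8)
The plan is to reduce the problem to a local computation at each maximal ideal of $D$, via Lemma \ref{LemLocalDim}, and then to handle the ideals in $\mathcal{M}_0$ and those in $\mathcal{M}_1$ by two different devices. By Lemma \ref{LemLocalDim} we have
$$\dim(\mathrm{Int}(E,D))=\sup\left\{\dim(\mathrm{Int}(E,D)_{\mathfrak{m}});\,\mathfrak{m}\in\mathrm{Max}(D)\right\},$$
and since $\mathrm{Max}(D)=\mathcal{M}_0\cup\mathcal{M}_1$ it suffices to prove the two local estimates $\dim(\mathrm{Int}(E,D)_{\mathfrak{m}})\leqslant 1+\dim_v(W_{\mathfrak{m}})$ for every $\mathfrak{m}\in\mathcal{M}_0$ and $\dim(\mathrm{Int}(E,D)_{\mathfrak{m}})\leqslant\dim(D_{\mathfrak{m}}[X])$ for every $\mathfrak{m}\in\mathcal{M}_1$, and then to pass to the supremum.

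For a fixed $\mathfrak{m}\in\mathcal{M}_0$, i.e. with $D/\mathfrak{m}$ finite, I would re-run the estimate already carried out for a single maximal ideal in the proof of Theorem \ref{lpsvlf}: set $C(\mathfrak{m})=W_{\mathfrak{m}}\cap(\cap_{\mathfrak{m}'\neq\mathfrak{m}}D_{\mathfrak{m}'})$; by Lemma \ref{tlf} the domains $C(\mathfrak{m})$ and $D$ share $\mathfrak{m}$ and $C(\mathfrak{m})_{\mathfrak{m}}=W_{\mathfrak{m}}$, so $\mathrm{Int}(E,D)$ and $\mathrm{Int}(E,C(\mathfrak{m}))$ share $\mathrm{Int}(E,\mathfrak{m})$; since $D/\mathfrak{m}$ is finite, Lemma \ref{lemIneq} gives $\dim(\mathrm{Int}(E,D)_{\mathfrak{m}})\leqslant\dim(\mathrm{Int}(E,C(\mathfrak{m}))_{\mathfrak{m}})$; and from $W_{\mathfrak{m}}[X]=C(\mathfrak{m})_{\mathfrak{m}}[X]\subseteq\mathrm{Int}(E,C(\mathfrak{m}))_{\mathfrak{m}}$, together with $\dim\leqslant\dim_v$ and the monotonicity of $\dim_v$ on overrings, one concludes $\dim(\mathrm{Int}(E,D)_{\mathfrak{m}})\leqslant\dim_v(W_{\mathfrak{m}}[X])=1+\dim_v(W_{\mathfrak{m}})$. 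The point worth checking is that this chain of inequalities never used the global assumption that all residue fields of $D$ are finite, but only the finiteness of $D/\mathfrak{m}$ for the one ideal $\mathfrak{m}$ at hand (plus the finite-character hypothesis, needed to form $C(\mathfrak{m})$ and globally in force); this is indeed the case, so the estimate transfers to the ideals of $\mathcal{M}_0$ alone.

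For a fixed $\mathfrak{m}\in\mathcal{M}_1$, i.e. with $D/\mathfrak{m}$ infinite, the hypothesis imposed on $E$ is exactly that $E$ satisfies one of $(a)$, $(b)$, $(c)$ relative to the prime ideal $\mathfrak{p}=\mathfrak{m}$, which are the hypotheses of Lemma \ref{Triv}. Part $(1)$ of that lemma then gives $\mathrm{Int}(E,D)_{\mathfrak{m}}=D_{\mathfrak{m}}[X]$, whence $\dim(\mathrm{Int}(E,D)_{\mathfrak{m}})=\dim(D_{\mathfrak{m}}[X])$, which is even sharper than needed. Combining the two local bounds and rewriting $\sup_{\mathfrak{m}\in\mathrm{Max}(D)}$ as the supremum of the two partial suprema over $\mathcal{M}_0$ and $\mathcal{M}_1$ yields the stated inequality. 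I do not anticipate a genuine obstacle: the argument is essentially a bookkeeping of the localized estimates, and the only delicate point is the verification mentioned above, namely that the finite-residue-field hypothesis in the proof of Theorem \ref{lpsvlf} intervenes only one maximal ideal at a time.
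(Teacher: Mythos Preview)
Your proof is correct and follows essentially the same approach as the paper: reduce to a local computation via Lemma \ref{LemLocalDim}, handle $\mathfrak{m}\in\mathcal{M}_1$ by Lemma \ref{Triv}(1), and handle $\mathfrak{m}\in\mathcal{M}_0$ by re-running the single-ideal estimate from the proof of Theorem \ref{lpsvlf}. Your explicit observation that the finite-residue-field hypothesis in that proof is only used one maximal ideal at a time is exactly the point the paper glosses over with the phrase ``using the same arguments as in the proof of Theorem \ref{lpsvlf}''.
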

\begin{proof}
By Lemma \ref{LemLocalDim}, $\dim(\mathrm{Int}(E,D))=\sup\left\{\dim(\mathrm{Int}(E,D)_\mathfrak{m});\,\mathfrak{m}\in\mathrm{Max}(D)\right\}.$ So, let $\mathfrak{m}$ be a maximal ideal of $D$. If $D/\mathfrak{m}$ is infinite, then, by Lemma \ref{Triv}, we have $\mathrm{Int}(E,D)_\mathfrak{m}=D_{\mathfrak{m}}[X]$. For the case where $D/\mathfrak{m}$ is finite, using the same arguments as in the proof of Theorem \ref{lpsvlf}, we deduce that $\dim(\mathrm{Int}(E,D)_{\mathfrak{m}})\leqslant 1+\dim_{v}(W_{\mathfrak{m}}),$ which completes the proof.
\end{proof}

From the previous result, we obtain the following corollary, which generalizes \cite[Proposition 1.8]{T97}.

\begin{corollary}\label{psvf}
Let $D$ be a locally PVD of finite character and $E$ a  nonempty subset of $D$ that verifies one of the conditions cited for $E$ in Proposition \ref{PropM}. Then, $\dim(\mathrm{Int}(E,D))=\lambda +\dim(D),$ where $\lambda$ equals $1$ or $2$. 
\end{corollary}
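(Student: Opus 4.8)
The proof will combine Proposition \ref{PropM} with the structure of PVDs, in the same spirit in which Corollary \ref{LPVDs} combines Theorem \ref{lpsvlf} with it. First I set up the data needed to apply Proposition \ref{PropM}. Since $D$ is locally PVD, $D_\mathfrak{m}$ is a PVD for every $\mathfrak{m}\in\mathrm{Max}(D)$; I take $W_\mathfrak{m}:=D_\mathfrak{m}$ when $D_\mathfrak{m}$ is a valuation domain, and $W_\mathfrak{m}:=$ the associated valuation overring of $D_\mathfrak{m}$ otherwise. In either case $D_\mathfrak{m}$ and $W_\mathfrak{m}$ share the ideal $\mathfrak{m}D_\mathfrak{m}$, which is maximal (hence prime) in $W_\mathfrak{m}$; together with the standing hypotheses that $D$ has finite character and that $E$ satisfies one of the three listed conditions, this is exactly what Proposition \ref{PropM} requires. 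It therefore yields
$$\dim(\mathrm{Int}(E,D))\leqslant\sup\Bigl\{\ \sup\{1+\dim_v(W_\mathfrak{m});\ \mathfrak{m}\in\mathcal{M}_0\},\ \ \sup\{\dim(D_\mathfrak{m}[X]);\ \mathfrak{m}\in\mathcal{M}_1\}\ \Bigr\}.$$

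Next I bound the two inner suprema by $1+\dim(D)$ and $2+\dim(D)$ respectively. For $\mathfrak{m}\in\mathcal{M}_0$ the ring $W_\mathfrak{m}$ is a valuation domain sharing its maximal ideal with the PVD $D_\mathfrak{m}$, so $\dim_v(W_\mathfrak{m})=\dim(W_\mathfrak{m})=\dim(D_\mathfrak{m})\leqslant\dim(D)$ — the equalities being precisely those already used in the proof of Corollary \ref{LPVDs}. For $\mathfrak{m}\in\mathcal{M}_1$ I claim $\dim(D_\mathfrak{m}[X])\leqslant\dim(D_\mathfrak{m})+2\leqslant\dim(D)+2$; this rests on the general fact that $\dim(R[X])\leqslant\dim(R)+2$ for every PVD $R$ of finite dimension $d$. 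To see the latter, let $V$ be the associated valuation overring of $R$, $\mathfrak{m}$ the common maximal ideal, $k=R/\mathfrak{m}$ and $K=V/\mathfrak{m}$; then $R[X]$ is the pullback $V[X]\times_{K[X]}k[X]$, in which $R[X]\subseteq V[X]$ share the ideal $\mathfrak{m}[X]$. Applying the pullback dimension inequality \cite[Théorème 1]{C88} exactly as in the proof of Theorem \ref{Divides2} gives, with $\mathcal{P}$ the set of primes of $V[X]$ containing $\mathfrak{m}[X]$,
$$\dim(R[X])\leqslant\sup_{\mathfrak{Q}\in\mathcal{P}}\Bigl\{\dim(V[X]),\ \mathrm{ht}_{V[X]}(\mathfrak{Q})+\dim\bigl(R[X]/(\mathfrak{Q}\cap R[X])\bigr)\Bigr\}.$$
Since $V$ is a valuation domain it is Jaffard, so $\dim(V[X])=d+1$ and $\mathrm{ht}_{V[X]}(\mathfrak{Q})\leqslant d+1$ for every $\mathfrak{Q}$; moreover $\mathfrak{Q}\cap R[X]$ contains $\mathfrak{m}[X]$, so $R[X]/(\mathfrak{Q}\cap R[X])$ is a quotient of $R[X]/\mathfrak{m}[X]\cong k[X]$ and thus has dimension at most $1$. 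Hence $\dim(R[X])\leqslant d+2$, proving the claim.

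Plugging these estimates into the displayed bound gives $\dim(\mathrm{Int}(E,D))\leqslant 2+\dim(D)$. For the lower bound, $E\subseteq D$ is in particular a fractional subset of $D$, so $\dim(\mathrm{Int}(E,D))\geqslant 1+\dim(D)$ by \cite[Proposition V.1.5]{C97} (the inequality invoked in the proof of Corollary \ref{LPVDs}). Combining, $1+\dim(D)\leqslant\dim(\mathrm{Int}(E,D))\leqslant 2+\dim(D)$, which is exactly the assertion $\dim(\mathrm{Int}(E,D))=\lambda+\dim(D)$ with $\lambda\in\{1,2\}$ (the case $\dim(D)=\infty$ being trivial). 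The only genuinely new ingredient is the bound $\dim(R[X])\leqslant\dim(R)+2$ for PVDs — this is also known independently in the literature — and I expect the re-derivation of it through the pullback $V[X]\times_{K[X]}k[X]$ to be the step requiring the most care; everything else is a direct application of Proposition \ref{PropM} together with facts already established in the paper.
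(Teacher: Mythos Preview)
Your proof is correct and follows essentially the same route as the paper's: both apply Proposition \ref{PropM} together with the structure of PVDs to obtain $1+\dim(D)\leqslant\dim(\mathrm{Int}(E,D))\leqslant 2+\dim(D)$, invoking \cite[Proposition V.1.5]{C97} for the lower bound. The only difference is in how the estimate $\dim(D_\mathfrak{m}[X])\leqslant \dim(D_\mathfrak{m})+2$ for $\mathfrak{m}\in\mathcal{M}_1$ is obtained: the paper cites \cite[Theorem 2.5]{HH2} and \cite[Theorem 39]{K74} directly (which in fact give the sharper statement that $\dim(D_\mathfrak{m}[X])$ equals either $1+\dim(D_\mathfrak{m})$ or $2+\dim(D_\mathfrak{m})$), whereas you re-derive the upper bound via the pullback $R[X]=V[X]\times_{K[X]}k[X]$ and \cite[Th\'eor\`eme 1]{C88}. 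Your derivation is correct and has the virtue of being self-contained within the techniques already used in the paper; just note that when $D_\mathfrak{m}$ is itself a valuation domain the bound $\dim(D_\mathfrak{m}[X])=\dim(D_\mathfrak{m})+1$ is immediate and no pullback is needed.
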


\begin{proof}
As in the proof of Corollary \ref{LPVDs}, we have  $\dim_v(W_{\mathfrak{m}})=\dim(W_{\mathfrak{m}})=\dim(D_{\mathfrak{m}}),$  for each maximal ideal $\mathfrak{m}$ of $D.$ So, let $\mathfrak{m}$ be a maximal ideal of $D,$ and then two cases are possible:

If $\mathfrak{m}\in\mathcal{M}_0,$ then $\dim(\mathrm{Int}(E,D)_{\mathfrak{m}})\leqslant 1+\dim_{v}(W_{\mathfrak{m}})=1+\dim(W_{\mathfrak{m}}),$ and so  $\dim(\mathrm{Int}(E,D)_{\mathfrak{m}})\leqslant 1+\dim(D_{\mathfrak{m}}).$

If $\mathfrak{m}\in\mathcal{M}_1,$ then $\dim(\mathrm{Int}(E,D)_{\mathfrak{m}})=\dim(D_{\mathfrak{m}}[X]).$ Hence, from \cite[Theorem 2.5]{HH2} and \cite[Theorem 39]{K74}, we deduce that $\dim(\mathrm{Int}(E,D)_{\mathfrak{m}})$ is either $1 + \dim(D_{\mathfrak{m}})$ or $2 + \dim(D_{\mathfrak{m}})$.

Consequently, in all cases, we have $\dim(\mathrm{Int}(E,D)_{\mathfrak{m}}) \leqslant 2 + \dim(D_{\mathfrak{m}})$. Therefore, by Proposition \ref{PropM} and \cite[Proposition V.1.5]{C97}, it follows that  $$1 + \dim(D) \leqslant \dim(\mathrm{Int}(E,D)) \leqslant 2 + \dim(D),$$ which completes the proof.
\end{proof}

The aim of the remaining part of this paper is to establish bounds on the Krull dimension of $\mathrm{Int}_B(E,D)$. Specifically, we will present our last main theorem, which extends Lemma \ref{lemIneq}, and subsequently illustrate its application through two examples. Before proceeding, we introduce a crucial lemma analogous to \cite[Lemma 1.1]{T97} for $\mathrm{Int}_B(E,D)$, whose proof is inspired by the aforementioned lemma.

\begin{lemma}\label{cidpol}
Let $I$ be an ideal of $D,$ $B$ an overring of $D$ and $E$ a nonempty subset of $K$. Set $\mathrm{Int}_B(E,I):=\{f\in B[X];\;f(E)\subseteq I\}.$ We have 
\begin{enumerate}[$(1)$]
\item $\mathrm{Int}_B(E,I)$ is an ideal of $\mathrm{Int}_B(E,D)$. 
\item If $D/I$ is finite, then prime ideals of $\mathrm{Int}_B(E,D)$ containing $\mathrm{Int}_B(E,I)$ are maximal, and hence the ring $\mathrm{Int}_B(E,D)/\mathrm{Int}_B(E,I)$ is zero dimensional. 
\end{enumerate}
\end{lemma}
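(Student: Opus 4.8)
The plan is to mimic the proof of \cite[Lemma 1.1]{T97}, adapting each step to the $B$-polynomial setting.

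\textbf{Part (1).} First I would verify that $\mathrm{Int}_B(E,I)$ is an ideal of $\mathrm{Int}_B(E,D)$. This is routine: if $f,g\in\mathrm{Int}_B(E,I)$ then $(f-g)(E)\subseteq I$, so $f-g\in\mathrm{Int}_B(E,I)$; and if $f\in\mathrm{Int}_B(E,I)$ and $h\in\mathrm{Int}_B(E,D)$, then for each $e\in E$ we have $h(e)\in D$ and $f(e)\in I$, so $(hf)(e)=h(e)f(e)\in I$ since $I$ is an ideal of $D$. Thus $hf\in\mathrm{Int}_B(E,I)$, and membership in $B[X]$ is clear since $B[X]$ is a ring.

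\textbf{Part (2).} The key point is to show that every prime ideal $\mathfrak{P}$ of $\mathrm{Int}_B(E,D)$ containing $\mathrm{Int}_B(E,I)$ is maximal. The strategy is to show that the quotient $R:=\mathrm{Int}_B(E,D)/\mathrm{Int}_B(E,I)$ is zero-dimensional (in fact I expect it to be a finite ring, or at least integral over a finite ring). Consider the evaluation-type map that sends $f\mapsto (\bar f(\bar e))_{\bar e\in E/I}$, where we first reduce the argument set $E$ modulo $I$; more precisely, pick a complete set of representatives $u_1,\dots,u_r$ of $D/I$, which is finite by hypothesis, and note that the values $f(e)\bmod I$ for $e\in E$ depend only on the residue class of $e$ in $D/I$ whenever $f$ has coefficients whose pairwise differences behave well — but this requires care because $f$ has coefficients in $B$, not in $D$. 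The cleaner route: observe that $\mathrm{Int}_B(E,D)$ contains the monic polynomial $g=\prod_{i=1}^r(X-u_i)$, which actually lies in $\mathrm{Int}_B(E,I)$ since $g(e)\equiv 0\pmod I$ for every $e\in D$ (each $e$ is congruent to some $u_i$ mod $I$). Hence in $R$ the image $\bar X$ of $X$ satisfies the monic relation $\prod_{i=1}^r(\bar X-\bar u_i)=0$, so $R$ is integral over the image $\bar D$ of $D$ in $R$, i.e. over $D/(I\cap\text{(something)})$. Since $D/I$ is finite and $\bar D$ is a quotient of $D$ annihilated by $I$ (because $I\subseteq\mathrm{Int}_B(E,I)$ as constants), $\bar D$ is finite, hence zero-dimensional, and therefore $R$, being integral over $\bar D$, is zero-dimensional as well. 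Consequently any prime of $\mathrm{Int}_B(E,D)$ containing $\mathrm{Int}_B(E,I)$ corresponds to a prime of $R$, which is maximal, and pulls back to a maximal ideal of $\mathrm{Int}_B(E,D)$.

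\textbf{Main obstacle.} The delicate step is handling the coefficients lying in $B$ rather than in $D$ or $K$: I must make sure the monic polynomial $g=\prod(X-u_i)$, built from representatives $u_i\in D$, genuinely lies in $\mathrm{Int}_B(E,I)\subseteq B[X]$ — this is fine since $D\subseteq B$ and $g(e)\in I$ for all $e\in E\subseteq D$ — and then argue that integrality of $\bar X$ over the finite ring $\bar D$ forces $R$ to be a finitely generated module over a finite ring, hence finite, hence Artinian. One should double-check that $\bar D\cong D/J$ for some ideal $J\supseteq I$ (namely $J=\{d\in D: d\in\mathrm{Int}_B(E,I)\}$, which contains $I$), so $\bar D$ is indeed finite. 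With these points settled, $\dim R=0$ follows and the proof is complete.
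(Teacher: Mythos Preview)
Your argument for part~(2) has two genuine gaps.

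First, the lemma allows $E$ to be an arbitrary nonempty subset of $K$, not of $D$. If $e\in E\setminus D$ then $g(e)=\prod_i(e-u_i)$ need not lie in $I$ (it need not even lie in $D$), so $g=\prod_i(X-u_i)$ is not in $\mathrm{Int}_B(E,I)$ in general. In fact $X$ itself need not belong to $\mathrm{Int}_B(E,D)$, so speaking of $\bar X\in R$ is already problematic.

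Second, and more seriously, even when $E\subseteq D$ so that $g\in\mathrm{Int}_B(E,I)$, knowing that $\bar X$ is integral over $\bar D$ does \emph{not} force $R=\mathrm{Int}_B(E,D)/\mathrm{Int}_B(E,I)$ to be integral over $\bar D$. The ring $\mathrm{Int}_B(E,D)$ is typically much larger than $D[X]$: it contains polynomials with coefficients in $B\setminus D$ (or, already in the classical case $B=K$, polynomials like $\binom{X}{2}$ with non-integral coefficients). So $R$ is not generated over $\bar D$ by $\bar X$, and the integrality of one element says nothing about the rest. Your concluding sentence that $R$ is ``a finitely generated module over a finite ring'' is thus unjustified.

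Both gaps are closed by the same small twist, which is exactly what the paper (following \cite[Lemma V.1.9]{C97} and \cite[Lemma 1.1]{T97}) does: replace $X$ by an \emph{arbitrary} $f\in\mathrm{Int}_B(E,D)$ and form $\prod_{i=1}^{r}(f-u_i)$. For every $e\in E$ one has $f(e)\in D$ by definition, so $f(e)\equiv u_j\pmod I$ for some $j$ and the product lands in $I$; hence $\prod_i(f-u_i)\in\mathrm{Int}_B(E,I)\subseteq\mathfrak{P}$. Primality gives $f-u_j\in\mathfrak{P}$ for some $j$, so every class in $\mathrm{Int}_B(E,D)/\mathfrak{P}$ is represented by some $u_j$, making the quotient a finite integral domain and therefore a field. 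This simultaneously avoids any assumption on $E$ and proves integrality (indeed finiteness) for every element of $R$, not just $\bar X$.
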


\begin{proof}
Statement (1) is clear. For statement $(2)$, assume that $D/I$ is finite with cardinality $q$  and let $\mathfrak{P}$ be a prime ideal of $\mathrm{Int}_B(E,D)$ containing $\mathrm{Int}_B(E,I)$. We aim to prove that $\mathfrak{P}$ is maximal.  To achieve this, consider a set of representatives $\{ u_1, \dots, u_q \}$ of $D$ modulo $I$ (where $q$ is the quardinality of $D/I$) and arguing similarly to the proof of \cite[Lemma V.1.9]{C97}. So, let $f$ be an element of $\int_B(E, D)$. Then, the product $\prod_{i=1}^{r}(f(X) - u_i)$ lies in $\int_B(E, I)$, and hence, in $\mathfrak{P}$. Therefore, one of its factors must also lie in $\mathfrak{P}$, which means $\{ u_1, \dots, u_r \}$ is a set of representatives of $\int_B(E, D)$ modulo $\mathfrak{P}$. Consequently, $\int_B(E, D)/\mathfrak{P}$ is a field, which is equivalent to stating that $\mathfrak{P}$ is maximal. Hence, we deduce that $\dim(\int_B(E, D)/\int_B(E, I)) = 0.$
\end{proof}

\begin{theorem}\label{main1}
Let $D\subset R\subset B$ be integral domains such that $D\subset R$ share a common nonzero ideal $I$ and $E$ a nonempty subset of the quotient field of $D.$ If $D/I$ is finite, then $$\dim(\mathrm{Int}_B(E,D))\leqslant \dim(\mathrm{Int}_B(E,R)).$$ If, in addition, $E$ is a fractional subset of $D$ contained in $R$, then $$1+\dim(D)\leqslant\dim(\mathrm{Int}_B(E,D))\leqslant 1+\dim_v(R).$$
\end{theorem}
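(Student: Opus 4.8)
The plan is to follow the pattern established by Lemma \ref{lemIneq} (which is the $B = K$ case) and adapt the pullback argument to the intermediate overring $B$. For the first inequality, I would observe that since $D \subset R$ share the ideal $I$, the set $\mathrm{Int}_B(E,I) = \{f \in B[X];\, f(E) \subseteq I\}$ is a common ideal of $\mathrm{Int}_B(E,D)$ and $\mathrm{Int}_B(E,R)$: indeed $f(E) \subseteq I$ forces $f(E) \subseteq R$, so $\mathrm{Int}_B(E,I) \subseteq \mathrm{Int}_B(E,R) \subseteq \mathrm{Int}_B(E,D)$ (the last inclusion because $R \supseteq D$ enlarges the target, wait — one must be careful with the direction here: $\mathrm{Int}_B(E,D) \subseteq \mathrm{Int}_B(E,R)$ since $D \subseteq R$). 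So $\mathrm{Int}_B(E,I)$ sits inside $\mathrm{Int}_B(E,D)$ and is an ideal of it by Lemma \ref{cidpol}(1), and likewise an ideal of $\mathrm{Int}_B(E,R)$. This gives the pullback square
$$\xymatrix{
    \mathrm{Int}_B(E,D) \ar[r] \ar[d] & \mathrm{Int}_B(E,D)/\mathrm{Int}_B(E,I) \ar[d] \\
    \mathrm{Int}_B(E,R) \ar[r] & \mathrm{Int}_B(E,R)/\mathrm{Int}_B(E,I).
}$$
Since $D/I$ is finite, Lemma \ref{cidpol}(2) gives $\dim\bigl(\mathrm{Int}_B(E,D)/\mathrm{Int}_B(E,I)\bigr) = 0$, and then \cite[Corollaire 1 du Théorème 1, page 509]{C88} applied to this pullback yields $\dim(\mathrm{Int}_B(E,D)) \leqslant \dim(\mathrm{Int}_B(E,R))$.

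For the second chain of inequalities, assume now that $E$ is a fractional subset of $D$ contained in $R$. The lower bound $1 + \dim(D) \leqslant \dim(\mathrm{Int}_B(E,D))$ should follow from \cite[Proposition V.1.5]{C97} (the standard bound $\dim(D) + 1 \leqslant \dim(\mathrm{Int}(E,D))$ for fractional $E$) together with the containment $\mathrm{Int}_B(E,D) \supseteq D[X]$ — more precisely, one has $D[X] \subseteq \mathrm{Int}_B(E,D) \subseteq \mathrm{Int}(E, D)$ is \emph{not} the right sandwich; rather, since $\mathrm{Int}_B(E,D)$ and $\mathrm{Int}(E,D)$ both contain $D[X]$ and $\mathrm{Int}_B(E,D) \subseteq \mathrm{Int}(E,D)$ (as $B \subseteq K$), and $\mathrm{Int}(E,\mathfrak{p})$-type ideals are shared, one can transfer the chain of primes constructed in the proof of \cite[Proposition V.1.5]{C97} for $\mathrm{Int}(E,D)$ down to $\mathrm{Int}_B(E,D)$; alternatively and more cleanly, since $D[X] \subseteq \mathrm{Int}_B(E,D) \subseteq B[X]$ and a fractional $E \subseteq R \subseteq B$ makes $\mathrm{Int}_B(E,D)$ an overring-type extension of $D[X]$ inside $B[X]$ sharing suitable ideals, the height-$(1+\dim D)$ chain of $D[X]$ extends. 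I would settle this lower bound by citing \cite[Proposition V.1.5]{C97} directly if it is stated for $\mathrm{Int}_B$, or else by the pullback/contraction trick. For the upper bound, apply the first part with $R$ in place of $R$: we get $\dim(\mathrm{Int}_B(E,D)) \leqslant \dim(\mathrm{Int}_B(E,R))$. Now $E \subseteq R$, so $\mathrm{Int}_B(E,R) = \{f \in B[X];\, f(E) \subseteq R\} \subseteq \mathrm{Int}_R(E,R)$-style reasoning applies, but the key point is that $R[X] \subseteq \mathrm{Int}_B(E,R) \subseteq \mathrm{Int}(E, R) \subseteq R'[X]$ for a suitable overring; the cleanest route is to bound $\dim(\mathrm{Int}_B(E,R)) \leqslant \dim_v(\mathrm{Int}_B(E,R))$ and then use that every overring $T$ of $\mathrm{Int}_B(E,R)$ that is a valuation domain restricts to a valuation overring of $R$ (since $R[X] \subseteq \mathrm{Int}_B(E,R)$ and $E$ is fractional, so $\mathrm{Int}_B(E,R)$ has quotient field $B(X)$ and is integral-ish over $R[X]$), giving $\dim_v(\mathrm{Int}_B(E,R)) \leqslant \dim_v(R[X]) = 1 + \dim_v(R)$ via the standard Jaffard-type fact $\dim_v(R[X]) = 1 + \dim_v(R)$.

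More carefully, for the upper bound I would argue: $R[X] \subseteq \mathrm{Int}_B(E,R) \subseteq K[X]$ is false since $B$ need not be $K$; instead $R[X] \subseteq \mathrm{Int}_B(E,R) \subseteq B[X]$, and because $E$ is a fractional subset of $D$ (hence of $R$), the ring $\mathrm{Int}_B(E,R)$ is sandwiched between $R[X]$ and $B[X]$ and shares with $R[X]$ an ideal of the form $\mathrm{Int}_B(E, dR)$ for a suitable nonzero $d \in D$ with $dE \subseteq R$ — wait, that requires $R/dR$-type finiteness which we do not have. The robust approach is: $\dim(\mathrm{Int}_B(E,R)) = \sup_{\mathfrak{M}} \dim(\mathrm{Int}_B(E,R)_{\mathfrak{M}})$, localize at a maximal ideal, use $\dim(\mathrm{Int}_B(E,R)_{\mathfrak{M}}) \leqslant \dim_v(\mathrm{Int}_B(E,R)_{\mathfrak{M}}) \leqslant \dim_v(R[X]_{\cdot}) \leqslant \dim_v(R[X]) = 1 + \dim_v(R)$, invoking \cite[Exercise V.5.(ii)]{C97} which gives $\dim_v(\mathrm{Int}(E,R)) = 1 + \dim_v(R)$ for fractional $E$ and noting $\mathrm{Int}_B(E,R) \subseteq \mathrm{Int}(E,R)$ so $\dim_v(\mathrm{Int}_B(E,R)) \leqslant \dim_v(\mathrm{Int}(E,R)) = 1 + \dim_v(R)$ (using that valuative dimension decreases for overrings, and $\mathrm{Int}(E,R)$ is an overring of $\mathrm{Int}_B(E,R)$). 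Combining, $\dim(\mathrm{Int}_B(E,D)) \leqslant \dim(\mathrm{Int}_B(E,R)) \leqslant 1 + \dim_v(R)$.

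The main obstacle I anticipate is the upper bound: getting from $\dim(\mathrm{Int}_B(E,R))$ to $1 + \dim_v(R)$ cleanly. The temptation is to use $\dim_v(\mathrm{Int}(E,R)) = 1 + \dim_v(R)$ from \cite[Exercise V.5.(ii)]{C97}, but one must verify (i) that $\mathrm{Int}(E,R)$ really is an overring of $\mathrm{Int}_B(E,R)$ — true since $B \subseteq K$ forces $B[X] \subseteq K[X]$ hence $\mathrm{Int}_B(E,R) \subseteq \mathrm{Int}(E,R)$, and both have the same quotient field because both contain $R[X]$ and are contained in $K(X)$ — and hence $\dim_v$ is monotone, and (ii) that $\dim \leqslant \dim_v$, which is immediate. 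So the real content is just packaging these standard facts; the lower bound $1 + \dim(D)$ is the one place I would double-check whether \cite[Proposition V.1.5]{C97} as stated applies to $\mathrm{Int}_B$ or whether the chain-contraction argument from $B[X]$ (or $D[X]$) is needed instead, since $\mathrm{Int}_B(E,D) \supseteq D[X]$ always and a fractional $E$ does not immediately force $\mathrm{Int}_B(E,D) \supseteq$ anything bigger.
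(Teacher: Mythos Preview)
Your pullback argument for the first inequality is correct and matches the paper exactly: $\mathrm{Int}_B(E,I)$ is a common ideal, Lemma~\ref{cidpol}(2) kills the quotient, and Cahen's corollary gives the bound.

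For the additional statement, however, there are two issues. First, the lower bound: you are right to worry that \cite[Proposition V.1.5]{C97} is stated for $\mathrm{Int}(E,D)$, not $\mathrm{Int}_B(E,D)$, and your chain-contraction sketch remains inconclusive. The paper simply cites \cite[Corollary 2.8]{CT23}, which is a result written precisely for $\mathrm{Int}_B(E,D)$ and gives $1+\dim(D)\leqslant\dim(\mathrm{Int}_B(E,D))$ when $E$ is a fractional subset of $D$. Second, and more seriously, your ``cleanest route'' for the upper bound has the monotonicity of valuative dimension reversed. You write $\dim_v(\mathrm{Int}_B(E,R)) \leqslant \dim_v(\mathrm{Int}(E,R))$ because $\mathrm{Int}(E,R)$ is an overring of $\mathrm{Int}_B(E,R)$; but passing to an overring \emph{decreases} $\dim_v$, so this gives $\dim_v(\mathrm{Int}(E,R))\leqslant\dim_v(\mathrm{Int}_B(E,R))$, the wrong direction. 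The correct move is the one you mention and then abandon: since $E\subseteq R\subseteq B$, every polynomial in $R[X]$ maps $E$ into $R$, hence $R[X]\subseteq\mathrm{Int}_B(E,R)$, so $\mathrm{Int}_B(E,R)$ is an overring of $R[X]$ and therefore $\dim(\mathrm{Int}_B(E,R))\leqslant\dim_v(\mathrm{Int}_B(E,R))\leqslant\dim_v(R[X])=1+\dim_v(R)$. This is exactly the paper's one-line argument.
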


\begin{proof}
It is clear that $\mathrm{Int}_B(E,D)\subset\mathrm{Int}_B(E,R))$ and $\mathrm{Int}_B(E,I)$ is a common ideal of $\mathrm{Int}_B(E,D)$ and $\mathrm{Int}_B(E,R),$ so we can consider the following pullback diagram:
$$\xymatrix{
    \mathrm{Int}_B(E,D) \ar[r] \ar[d]  & \mathrm{Int}_B(E,D)/\mathrm{Int}_B(E,I) \ar[d] \\
    \mathrm{Int}_B(E,R) \ar[r] & \mathrm{Int}_B(E,R)/\mathrm{Int}_B(E,I).
  }$$ 
  Thus, combining \cite[Corollaire 1 du Théorème 1, page 509]{C88} with Lemma \ref{cidpol}, we deduce that
$$\dim(\mathrm{Int}_B(E,D))\leqslant\dim(\mathrm{Int}_B(E,R)).$$ 
The additional statement follows from \cite[Corollary 2.8]{CT23} for the left inequality  and from the inclusion $R[X]\subseteq \mathrm{Int}_B(E,R)$ for the right inequality. 
\end{proof}

A finite Krull dimensional domain $D$ is said to be \textit{Jaffard} if $\dim_v(D) = \dim(D).$ For instance, in the finite Krull dimensional case, Noetherian domains and Pr\"ufer domains are examples of Jaffard domains.

\begin{corollary}\label{c11}
Under the notation and assumptions of Theorem \ref{main1} with $R$ is assumed to be a Jaffard domain, the following statements hold.
\begin{enumerate}[$(1)$]
\item $\dim(D)\leqslant \dim(\mathrm{Int}_B(E,D))\leqslant 1+\dim(R).$
\item If $E$ is a fractional subset of $D$ contained in $R$, then $$1+\dim(D)\leqslant \dim(\mathrm{Int}_B(E,D))\leqslant 1+\dim(R).$$
\end{enumerate}
\end{corollary}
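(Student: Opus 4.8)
The plan is to deduce both statements from Theorem \ref{main1}, the only extra input being the defining property of a finite-dimensional Jaffard domain, $\dim_v(R)=\dim(R)$; with this, each occurrence of $\dim_v(R)$ in the conclusion of Theorem \ref{main1} may be rewritten as $\dim(R)$. Statement $(2)$ is then immediate, since its hypotheses are precisely those under which the second assertion of Theorem \ref{main1} holds, and that assertion reads $1+\dim(D)\leqslant\dim(\mathrm{Int}_B(E,D))\leqslant 1+\dim_v(R)=1+\dim(R)$.

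For statement $(1)$ I would establish the two inequalities separately. \emph{Upper bound.} By the first assertion of Theorem \ref{main1}, $\dim(\mathrm{Int}_B(E,D))\leqslant\dim(\mathrm{Int}_B(E,R))$, so it suffices to show $\dim(\mathrm{Int}_B(E,R))\leqslant 1+\dim(R)$. Since $D$ and $R$ share a nonzero ideal we have $\mathrm{qf}(D)=\mathrm{qf}(R)=:K$, and every $f\in\mathrm{Int}_B(E,R)$ satisfies $f(E)\subseteq R\subseteq K$; using this one gets $R\subseteq\mathrm{Int}_B(E,R)\subseteq K[X]$. The standard valuative-dimension estimate for a ring sandwiched between $R$ and $K[X]$ (the one underlying $\dim_v(R[X])=1+\dim_v(R)$) then gives $\dim(\mathrm{Int}_B(E,R))\leqslant\dim_v(\mathrm{Int}_B(E,R))\leqslant 1+\dim_v(R)=1+\dim(R)$. \emph{Lower bound.} I would realize $D$ as a homomorphic image of $\mathrm{Int}_B(E,D)$: fix some $e_0\in E$ (possible as $E\neq\emptyset$), and consider the evaluation map
$$\varepsilon_{e_0}\colon\ \mathrm{Int}_B(E,D)\longrightarrow D,\qquad f\longmapsto f(e_0).$$
It is a well-defined ring homomorphism, because $f(E)\subseteq D$ for every $f\in\mathrm{Int}_B(E,D)$, and it is surjective, because it restricts to the identity on the constant subring $D$. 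Hence $D\cong\mathrm{Int}_B(E,D)/\ker\varepsilon_{e_0}$, and therefore $\dim(D)\leqslant\dim(\mathrm{Int}_B(E,D))$.

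The step I expect to require the most care is the inclusion $\mathrm{Int}_B(E,R)\subseteq K[X]$ used in the upper bound of $(1)$: here $E$ is neither assumed fractional nor assumed contained in $R$, so one cannot invoke the second assertion of Theorem \ref{main1}; one must instead argue directly that a $B$-polynomial whose values on $E$ lie in $K$ already has coefficients in $K$ (for instance by interpolation when $E$ is infinite, or using the standing hypotheses on $B$), and then reproduce, for $\mathrm{Int}_B(E,R)$, the valuative-dimension bookkeeping that the proof of Theorem \ref{main1} carries out for $\mathrm{Int}_B(E,D)$ under the stronger hypotheses on $E$.
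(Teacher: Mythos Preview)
Your overall route---derive everything from Theorem \ref{main1} and then replace $\dim_v(R)$ by $\dim(R)$ using the Jaffard hypothesis---is exactly the paper's. Statement $(2)$ is handled as you say, and your lower bound for $(1)$ via the evaluation map $\varepsilon_{e_0}$ is a correct and pleasantly self-contained substitute for the paper's citation of \cite[Corollary 2.8]{CT23}.

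The real divergence is in the upper bound of $(1)$, and the step you yourself flag is a genuine obstruction, not just a matter of care. The inclusion $\mathrm{Int}_B(E,R)\subseteq K[X]$ can fail outright under the standing hypotheses of Theorem \ref{main1}: nothing there forces $B$ to be contained in $K=\mathrm{qf}(D)=\mathrm{qf}(R)$, and if $B\not\subseteq K$ while $E$ is, say, a single point $\{e_0\}$, then $\mathrm{Int}_B(\{e_0\},R)=R+(X-e_0)B[X]$ contains $B$-polynomials with coefficients outside $K$. Neither interpolation (which needs $E$ infinite) nor any ``standing hypothesis on $B$'' rescues this, because the only hypothesis on $B$ is $R\subset B$. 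The paper does not attempt to prove this inclusion; it bypasses the issue entirely by asserting $\dim(\mathrm{Int}_B(E,R))=1+\dim(R)$ for Jaffard $R$ as a known fact (implicitly drawn from the companion paper \cite{CT23}), and then applies the first inequality of Theorem \ref{main1}. So the strategy is the same, but the paper imports the bound on $\dim(\mathrm{Int}_B(E,R))$ from the literature rather than deriving it via an inclusion into $K[X]$, and your proposed derivation does not go through in the stated generality.
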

\begin{proof}
(1) The left inequality is established in \cite[Corollary 2.8]{CT23}. For the right inequality, since $R$ is a Jaffard domain, we have $\dim(\mathrm{Int}_B(E,R))= 1+\dim(R)$, and thus the desired inequality follows from Theorem \ref{main1}.  \\
(2) This follows by combining \cite[Corollary 2.8]{CT23} with the previous statement.
\end{proof}

As a consequence of Corollary \ref{c11}, we derive the following result:
\begin{corollary}\label{CorPVD}
Let $D$ be a PVD with a finite residue field and $V$ its associated valuation domain. For any overring $B$ of $V$ and any nonempty subset $E$ of the quotient field of $D$, we have:
\begin{enumerate}[$(1)$]
\item $\dim(\mathrm{Int}_B(E,D))=\lambda +\dim(D),$ where $\lambda$ equals $0$ or $1$.
\item If $E$ is a fractional subset of $D$ contained in $V$, then $$ \dim(\mathrm{Int}_B(E,D))= 1+\dim(D).$$
\end{enumerate}
\end{corollary}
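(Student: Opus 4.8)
The plan is to deduce Corollary \ref{CorPVD} from Corollary \ref{c11} by making the right choices of $R$ and exploiting the special structure of a PVD. Recall that a PVD $D$ with finite residue field is local with maximal ideal $\mathfrak{m}$, and its associated valuation domain $V$ shares the ideal $\mathfrak{m}$ with $D$; moreover $\dim(V) = \dim(D)$ and, since $V$ is a valuation domain, $V$ is Jaffard, i.e. $\dim_v(V) = \dim(V) = \dim(D)$. The nonzero ideal $I$ common to $D$ and $V$ is $\mathfrak{m}$, and $D/\mathfrak{m}$ is a finite field by hypothesis. So the chain $D \subset V \subset B$ (noting $B$ is an overring of $V$, hence of $D$) fits the framework of Theorem \ref{main1} and Corollary \ref{c11} with $R = V$.

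For part (1), I would apply Corollary \ref{c11}(1) with $R = V$: this gives $\dim(D) \leqslant \dim(\mathrm{Int}_B(E,D)) \leqslant 1 + \dim(V) = 1 + \dim(D)$. Since the Krull dimension of $\mathrm{Int}_B(E,D)$ is an integer sandwiched between $\dim(D)$ and $\dim(D)+1$, it must equal $\dim(D) + \lambda$ with $\lambda \in \{0,1\}$, which is exactly the claimed statement. For part (2), if in addition $E$ is a fractional subset of $D$ contained in $V$, then Corollary \ref{c11}(2) applies (again with $R = V$) and yields $1 + \dim(D) \leqslant \dim(\mathrm{Int}_B(E,D)) \leqslant 1 + \dim(V) = 1 + \dim(D)$, forcing the equality $\dim(\mathrm{Int}_B(E,D)) = 1 + \dim(D)$.

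The only genuine verification needed is that the hypotheses of Theorem \ref{main1}/Corollary \ref{c11} are met: that $D \subset V$ share a nonzero ideal (namely $\mathfrak{m}$, using the defining property of the associated valuation domain of a PVD from \cite{HH78}), that $D/\mathfrak{m}$ is finite (given), that $V$ is Jaffard (automatic for valuation domains of finite Krull dimension; note $\dim(V) = \dim(D)$ so this is finite), and that $B$ is an overring of $V$ hence sits above $V$ as required. None of these is an obstacle — the main point is simply recognizing that the associated valuation domain $V$ plays the role of the intermediate Jaffard domain $R$, after which the result is immediate from Corollary \ref{c11}.

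\begin{proof}
Since $D$ is a PVD that is not necessarily a valuation domain, recall from \cite{HH78} that $D$ is local with maximal ideal $\mathfrak{m}$, and that $D$ and its associated valuation domain $V$ share the ideal $\mathfrak{m}$; moreover $\dim(V)=\dim(D)$. As $V$ is a valuation domain of finite Krull dimension, it is a Jaffard domain, and $B$ is an overring of $V$. Taking $R=V$ and $I=\mathfrak{m}$ in Corollary \ref{c11}, the hypotheses are satisfied because $D/\mathfrak{m}$ is finite by assumption.

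\rm(1) Corollary \ref{c11}(1) gives $\dim(D)\leqslant\dim(\mathrm{Int}_B(E,D))\leqslant 1+\dim(V)=1+\dim(D)$. Hence $\dim(\mathrm{Int}_B(E,D))=\lambda+\dim(D)$ with $\lambda\in\{0,1\}$.

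\rm(2) If $E$ is a fractional subset of $D$ contained in $V$, then Corollary \ref{c11}(2) applies and yields $1+\dim(D)\leqslant\dim(\mathrm{Int}_B(E,D))\leqslant 1+\dim(V)=1+\dim(D)$, so that $\dim(\mathrm{Int}_B(E,D))=1+\dim(D)$.
\end{proof}
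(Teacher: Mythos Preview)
Your proof is correct and follows essentially the same approach as the paper: you take $R=V$ in Corollary~\ref{c11}, verify that $V$ is Jaffard with $\dim(V)=\dim(D)$, and read off both statements from the resulting inequalities. The paper's proof is just a terser version of exactly this argument.
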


\begin{proof}
In this case, $R$ corresponds to the valuation domain $V$, and so we are in the hypotheses of Corollary \ref{c11}. Consequently, the desired conclusion follows from the equality $\dim(D)=\dim(V).$
\end{proof}

We conclude this paper by providing two examples that effectively illustrate Corollary \ref{CorPVD}, including cases of integral domains $D$ that are PVDs but not Jaffard domains.

\begin{example}
Let $k$ be a finite field and $L$ a field containing $k$ such that the extension $L/k$ is transcendental. Consider the integral domain  $D:=k+TL[[T]],$ where $T$ is an indeterminate over $L$. It is known to be a one-dimensional PVD with a finite residue field (which is not a Jaffard domain by \cite[Proposition 2.5\rm(b)]{ABDFK88}). Thus, by applying Corollary \ref{CorPVD}, the Krull dimension of $\mathrm{Int}_{B}(E,D)$ is equal to two, for any  nonempty subset $E$ of $D$ and any overring $B$ of $V:=L[[T]].$ 
\end{example}

\begin{example}
 Let  $Y$ and $Z$ be two indeterminates over a finite field $k$. Set $D:=k+Zk(Y)[Z]_{(Z)}$ and $V:=k(Y)[Z]_{(Z)},$ and let $E$ a fractional subset of $D$ and $B$ an overring of $V$. It is known that $D$ is a one-dimensional PVD that is not Jaffard because $k[Y]_{(Y)}+Zk(Y)[Z]_{(Z)}$ is a two-dimensional valuation overring of $D.$ Hence, by Corollary \ref{CorPVD},   $\dim(\int_B(E,D))=1+\dim(D)=2.$ Moreover, it follows from \cite[Proposition 2.9(2)]{CT23} that $\dim_v(\int_B(E,D))=1+\dim_v(D)=3,$ and thus $\int_B(E,D)$ is  not Jaffard by \cite[Proposition 2.9(3)]{CT23}. 
\end{example}

\textbf{Acknowledgements}. The authors sincerely thank the referee for the helpful comments and suggestions that contributed to improving the exposition of this paper.

\end{document}